\documentclass{elsarticle}

\usepackage{amssymb,amsmath}%,amsthm}
%\simpleequations

\newcommand{\F}{\mathbb{F}}
\newcommand{\Z}{\mathbb{Z}}
\newcommand{\N}{\mathbb{N}}

\newcommand{\bq}{\mathbf{q}}

\DeclareMathOperator{\ord}{ord}
\DeclareMathOperator{\PCN}{PCN}
\DeclareMathOperator{\CN}{CN}

\newtheorem{theorem}{Theorem}[section]
\newtheorem{lemma}[theorem]{Lemma}
\newtheorem{proposition}[theorem]{Proposition}
\newtheorem{conjecture}[theorem]{Conjecture}
\newtheorem{corollary}[theorem]{Corollary}

\newdefinition{definition}[theorem]{Definition}

\newproof{remark}{Remark}
\newproof{proof}{Proof}

\begin{document}

\title{On the existence of primitive completely normal bases of finite fields}
\author[crete]{Theodoulos Garefalakis}
\ead{tgaref@uoc.gr}

\author[sabanci]{Giorgos Kapetanakis\corref{cor1}}
\ead{gnkapet@gmail.com}

\cortext[cor1]{Corresponding author}

\address[crete]{Department of Mathematics and Applied Mathematics, University of Crete, Voutes Campus, 70013 Heraklion, Greece}

\address[sabanci]{Sabanci University, FENS, Orhanli-Tuzla, 34956 Istanbul, Turkey}

\begin{abstract}
Let $\F_q$ be the finite field of characteristic $p$ with $q$ elements and $\F_{q^n}$ its extension of degree $n$. We prove that there exists a primitive element of $\F_{q^n}$ that produces a completely normal basis of $\F_{q^n}$ over $\F_q$, provided that $n=p^{\ell}m$ with $(m,p)=1$ and $q>m$.
\end{abstract}

\begin{keyword}
finite fields \sep character sums \sep primitive elements \sep normal basis \sep completely normal basis
\MSC[2010]{11T24}
\end{keyword}

\maketitle

\section{Introduction}\label{sec:intro}

Let $\F_q$ be the finite field of cardinality $q$ and $\F_{q^n}$ its extension of degree $n$, where $q$ is a prime power and $n$ is a positive integer.
A generator of the multiplicative group $\F_{q^n}^*$ is called \emph{primitive}. Besides their theoretical interest, primitive
elements of finite fields are widely used in various applications, including
cryptographic schemes, such as the Diffie-Hellman key exchange
\cite{diffiehellman76}.

An \emph{$\F_q$-normal basis} of $\F_{q^n}$ is an $\F_q$-basis of $\F_{q^n}$ of the form $\{ x, x^q, \ldots , x^{q^{n-1}} \}$ and the element $x\in\F_{q^n}$ is called \emph{normal over $\F_q$}. These bases bear computational advantages for finite field
arithmetic, so they have numerous applications, mostly in coding theory and cryptography. For further information we refer to \cite{gao93} and the references therein.

It is well-known that primitive, see \cite[Theorem~2.8]{lidlniederreiter97}, and normal, see \cite[Theorem~2.35]{lidlniederreiter97}, elements exist for every $q$ and $n$. The existence of
elements that are simultaneously primitive and normal is also well-known.
\begin{theorem}[Primitive normal basis theorem]\label{thm:pnbt}
Let $q$ be a prime power and $n$ a positive integer. There exists some $x \in
\F_{q^n}$ that is simultaneously primitive and normal over $\F_q$.
\end{theorem}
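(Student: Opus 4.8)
The plan is to write the indicator function of the set of primitive normal elements as a product of two character sums and then estimate the resulting double sum. For the primitivity side: $x\in\F_{q^n}^*$ is primitive iff it is not a proper power, so a standard M\"obius inversion over the divisors of $q^n-1$ gives its indicator function as
$$\frac{\phi(q^n-1)}{q^n-1}\sum_{d\mid q^n-1}\frac{\mu(d)}{\phi(d)}\sum_{\ord\chi=d}\chi(x),$$
the inner sum running over the multiplicative characters of $\F_{q^n}^*$ of exact order $d$. For the normality side: viewing $\F_{q^n}$ as a module over $\F_q[X]$ with $X$ acting as the Frobenius $y\mapsto y^q$, the normal basis theorem (\cite[Theorem~2.35]{lidlniederreiter97}) says this module is cyclic, hence isomorphic to $\F_q[X]/(X^n-1)$, and $x$ is normal over $\F_q$ precisely when it is a generator, i.e.\ when its $\F_q[X]$-order is $X^n-1$. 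Running the same inversion in this setting, over the monic divisors $f$ of $X^n-1$ and with the polynomial analogues $\Phi,\mu$ of the Euler and M\"obius functions, one obtains the indicator function of normal elements as
$$\frac{\Phi(X^n-1)}{q^n}\sum_{f\mid X^n-1}\frac{\mu(f)}{\Phi(f)}\sum_{\ord\psi=f}\psi(x),$$
with $\psi$ ranging over the additive characters of $\F_{q^n}$ of $\F_q[X]$-order $f$.

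Next I would multiply these two expressions and sum over $x\in\F_{q^n}$. After interchanging the order of summation, the inner sum is a Gauss sum $\sum_x\chi(x)\psi(x)$, which contributes $q^n-1$ when $\chi,\psi$ are both trivial, $-1$ when $\chi$ is trivial and $\psi$ is not, $0$ when $\psi$ is trivial and $\chi$ is not, and has absolute value $q^{n/2}$ when both are nontrivial. Collecting the terms with $d=1$ yields the main term $\tfrac{\phi(q^n-1)}{q^n-1}\Phi(X^n-1)$, and the remaining terms---using that $\mu$ kills the non-squarefree divisors and that the factors $1/\phi(d)$, $1/\Phi(f)$ cancel the number of characters of the corresponding order---are bounded so that
$$\left|\,N-\frac{\phi(q^n-1)}{q^n-1}\,\Phi(X^n-1)\,\right|\;\le\;\frac{\phi(q^n-1)}{q^n-1}\,\Phi(X^n-1)\cdot\frac{W(q^n-1)\,W(X^n-1)}{q^{n/2}},$$
where $N$ is the number of primitive normal elements and $W(\cdot)$ counts squarefree (monic) divisors. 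In particular $N>0$ as soon as $q^{n/2}>W(q^n-1)\,W(X^n-1)$, and with standard bounds on the number of divisors this inequality holds for all but finitely many, explicitly listable pairs $(q,n)$.

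The main obstacle is exactly that finite exceptional list: when $n$ is small or $q^n-1$ is very composite the estimate above is not strong enough. To handle those cases I would refine the bound by a sieving inequality---one is allowed to drop from both sums all but a bounded number of the troublesome prime divisors of $q^n-1$ and irreducible divisors of $X^n-1$, at the cost of a controlled correction term---which shrinks the exceptional set dramatically; the genuinely small pairs that survive are then disposed of by direct computation, as in the original treatment of Lenstra and Schoof. I expect the character-sum identities and the Gauss-sum bound to be routine, and essentially all of the difficulty to lie in setting up the sieve and organizing the residual finite check.
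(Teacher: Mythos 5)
Your proposal is correct in outline and follows essentially the same route as the source the paper relies on for this statement (Lenstra--Schoof, refined by Cohen--Huczynska's sieve): Vinogradov-type characteristic functions for primitivity and for $\F_q[X]$-order $X^n-1$, interchange of summation, the Gauss-sum bound $q^{n/2}$, the sufficient condition $q^{n/2}>W(q^n-1)W(X^n-1)$, sieving, and a finite computational check. This is also exactly the framework the paper itself generalizes in Theorem~3.1 (with $k$ normality conditions instead of one), so there is nothing methodologically new or missing beyond the acknowledged finite verification.
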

Lenstra and Schoof \cite{lenstraschoof87} were the first to
prove Theorem~\ref{thm:pnbt}. Subsequently, Cohen
and Huczynska \cite{cohenhuczynska03} provided a computer-free proof with the
help of sieving techniques. Several generalizations of this have also been investigated
\cite{cohenhachenberger99,cohenhuczynska10,hsunan11,kapetanakis13,kapetanakis14}.

An element of $\F_{q^n}$ that is simultaneously normal over $\F_{q^l}$ for all $l\mid n$ is called \emph{completely normal over $\F_q$}. The existence of such elements for any $q$ and $n$ is well-known \cite{blessenohljohnsen86}. Morgan and Mullen \cite{morganmullen96} conjectured that for any $q$ and $n$, there exists a primitive completely normal element of $\F_{q^n}$ over $\F_q$.
\begin{conjecture}[Morgan-Mullen]\label{conj:mm}
Let $q$ be a prime power and $n$ a positive integer. There exists some $x \in
\F_{q^n}$ that is simultaneously primitive and completely normal over $\F_q$.
\end{conjecture}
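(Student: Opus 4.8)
The plan is to count the $x \in \F_{q^n}^*$ that are simultaneously primitive and completely normal via a hybrid character sum, and to prove this count positive for every $q$ and every $n$. Primitivity I would detect with the classical multiplicative indicator $\omega(x) = \theta(q^n-1)\sum_{d\mid q^n-1}\frac{\mu(d)}{\varphi(d)}\sum_{\ord(\chi)=d}\chi(x)$, where $\theta(q^n-1)=\varphi(q^n-1)/(q^n-1)$ and the inner sum runs over multiplicative characters of exact order $d$; this function is $1$ on primitive elements and $0$ elsewhere.

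Complete normality is governed by the Frobenius-module structure: $\F_{q^n}$ is a cyclic $\F_q[t]/(t^n-1)$-module under $\sigma:y\mapsto y^q$, and $x$ is normal over $\F_q$ exactly when its $\F_q[t]$-annihilator is $(t^n-1)$; being completely normal means that the analogous generation condition holds over $\F_{q^d}[s]/(s^{n/d}-1)$ with $s=\sigma^d$ for every $d\mid n$ simultaneously. I would invoke the decomposition theory of Blessenohl--Johnsen and Hachenberger to reduce this infinite-looking family to a finite set of critical module-generation requirements, organised along the factorisation $t^n-1=(t^m-1)^{p^\ell}$ with $n=p^\ell m$, $(m,p)=1$: the separable factor $t^m-1$ controls the semisimple cyclotomic pieces attached to each $d$, while the $p^\ell$-th power carries the radical (inseparable) part. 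Möbius inversion over the polynomial divisors of the relevant $s^{n/d}-1$ then yields an additive indicator $\Omega(x)$ assembled from Frobenius-module characters of prescribed order, with a polynomial analogue of Euler's function in the denominators.

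Forming $\Sigma=\sum_x \omega(x)\,\Omega(x)$ and expanding gives a double sum over pairs of characters; the pair of trivial characters produces the expected main term proportional to $q^n$, and every other pair contributes a Gauss-type sum. A Weil estimate bounds each mixed multiplicative/additive-module sum over $\F_{q^n}$ by a divisor-count factor times $q^{n/2}$, so the whole problem collapses to a single inequality of the shape $\theta(q^n-1)\,\theta_{\mathrm{add}}(q,n)\,q^{n/2}>W$, where $W$ tallies the contributing characters.

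The hard part --- and precisely why the conjecture remains open in general --- is this inequality when $q$ is small relative to $n$: the quantity $W$ grows with the number of divisors of $q^n-1$ and with the combinatorial size of the complete-module decomposition, and for small $q$ it overwhelms the $q^{n/2}$ gain. I would attack this with a sieve in the spirit of Cohen--Huczynska, replacing the full Möbius inversions by prime and prime-polynomial sieving so that only finitely many bad primes and irreducible factors remain, thereby shrinking $W$ substantially. The genuine obstruction is to run this sieve uniformly across all subfields $\F_{q^d}$ at once, so that complete normality rather than mere normality over the ground field is secured: the sieve conditions for different $d\mid n$ couple through the shared radical part $(t^m-1)^{p^\ell}$, and keeping the combined sieve effective forces a lower bound on $q$ in terms of $m$. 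Removing that dependence, and thereby covering the remaining small-$q$ cases, is the step I expect to resist a purely analytic treatment and to demand additional structural input beyond the character-sum method.
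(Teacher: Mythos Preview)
The statement you are attempting is Conjecture~\ref{conj:mm}, and the paper does \emph{not} prove it: it is presented as open, and the paper's actual theorems (Theorems~\ref{thm:our1} and \ref{thm:our0}) establish only the cases $q\ge n$ and, more generally, $n=p^\ell m$ with $m<q$. Your proposal is therefore not being measured against a proof in the paper, because no such proof exists there.

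That said, your outline is an accurate description of the method the paper uses to obtain its partial results, and your diagnosis of the obstruction is exactly right. The paper forms $\PCN_q(n)=\sum_x \omega(x)\prod_{l\mid n,\,l<n}\varOmega_l(x)$, separates the $\chi=\chi_0$ contribution (which gives $\theta(q')\CN_q(n)$), bounds the remaining mixed sums by $q^{n/2}$ via Gauss sums, and arrives at an inequality of precisely the shape you wrote. The resulting condition is only provably satisfied when $q$ is large compared with $m$; for small $q$ the divisor-count factor $W(q')\prod_i W_{l_i}(F'_{l_i})$ swamps $q^{n/2}$, which is why the paper's theorems carry the hypothesis $m<q$. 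You identified this barrier correctly, and you are also correct that the analytic method alone does not remove it.

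Two minor discrepancies with the paper's execution. First, the paper does \emph{not} invoke the Blessenohl--Johnsen/Hachenberger structural decomposition to build the complete-normality indicator; it simply takes the product of the separate normality indicators $\varOmega_l$ over all proper divisors $l\mid n$, accepting the resulting over-counting in the error term. Second, the paper deliberately avoids the Cohen--Huczynska sieve (see the remark after Theorem~\ref{thm:our2}), preferring the cruder bound and absorbing the handful of residual pairs $(n,q)$ via the explicit examples of Morgan and Mullen. Your proposed refinements would likely shrink the list of exceptional pairs but, as you yourself anticipate, would not close the gap to a full proof of the conjecture.
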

In order to support their claim, Morgan and Mullen provide examples for such elements for all pairs $(q,n)$ with $q\leq 97$ and $q^n<10^{50}$, see \cite{morganmullen96}. This conjecture is yet to be completely resolved. Partial results, covering certain types of extensions have been given, see \cite{hachenberger13} and the references therein. Recently, Hachenberger \cite{hachenberger16}, using elementary methods, proved the validity of Conjecture~\ref{conj:mm} for $q\geq n^3$ and $n\geq 37$. 
In this work, we begin by proving the following theorem in Section~\ref{sec:proof}.
\begin{theorem}\label{thm:our1}
Let $n\in\N$ and $q$ a prime power with $q\geq n$, then there exists a primitive completely normal element of $\F_{q^n}$ over $\F_q$.
\end{theorem}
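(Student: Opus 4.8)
The plan is to transplant the character-sum method behind the primitive normal basis theorem (Lenstra--Schoof~\cite{lenstraschoof87}, Cohen--Huczynska~\cite{cohenhuczynska03}) to the completely normal setting, encoding the (much stronger) completely normal condition through the module theory of completely free elements developed by Hachenberger (see~\cite{hachenberger16} and the monograph cited there). View $\F_{q^n}$ as an $\F_q[x]$-module with $x$ acting as the Frobenius $\sigma\colon y\mapsto y^q$, and for each $d\mid n$ as an $\F_{q^d}[x]$-module with $x$ acting as $\sigma^d$. An element $w$ is normal over $\F_{q^d}$ exactly when its $\F_{q^d}[x]$-order is the maximal polynomial $x^{n/d}-1$, so $w$ is completely normal over $\F_q$ iff this holds for every $d\mid n$ at once. (For $n$ prime this is nothing beyond normality over $\F_q$, already covered by Theorem~\ref{thm:pnbt}; the content is in composite $n$.)

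First I would assemble an indicator $\mathbf{1}_{\mathrm{cn}}$ of the completely normal elements of $\F_{q^n}$ in a form amenable to summation. By the decomposition theory of completely free elements, the family of conditions ``$\ord_{\F_{q^d}}(w)=x^{n/d}-1$'' over all $d\mid n$ is equivalent to a finite list --- of length $O(\log n)$ --- of module-freeness conditions attached to the prime-power subtowers of $\F_{q^n}/\F_q$. Each such condition is detected by an inclusion--exclusion over the distinct monic irreducible factors of the relevant cyclotomic polynomial, and each of the resulting events ``$h(\sigma^d)\,w=0$'' is $\F_q$-linear, hence handled by the orthogonality relations for the additive characters of $\F_{q^n}$. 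Expanding, one obtains
\[
\mathbf{1}_{\mathrm{cn}}(w)=\sum_{\mathbf{f}}\lambda_{\mathbf{f}}\,q^{-e_{\mathbf{f}}}\sum_{\psi\in\Lambda_{\mathbf{f}}}\psi(w),
\]
a finite linear combination of additive characters in which the number of index tuples $\mathbf{f}$ is at most a quantity $\mathcal{W}_1(q,n)\le 2^{O(n)}$. I expect this bookkeeping to be the main obstacle: one must package the conditions coming from the various $\F_{q^d}$ \emph{without overcounting}, and it is precisely here that Hachenberger's reduction to ``regular'' prime-power (cyclotomic) modules is indispensable; for the plain normal basis theorem this whole step collapses to a single condition on $x^n-1$ over $\F_q$.

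Next, multiply $\mathbf{1}_{\mathrm{cn}}$ by the classical indicator of primitive elements,
\[
\mathbf{1}_{\mathrm{prim}}(w)=\frac{\phi(q^n-1)}{q^n-1}\sum_{e\mid q^n-1}\frac{\mu(e)}{\phi(e)}\sum_{\ord\chi=e}\chi(w),
\]
and sum over $w\in\F_{q^n}$. This writes the number $\PCN(q^n)$ of primitive completely normal elements of $\F_{q^n}$ over $\F_q$ as a weighted sum of mixed character sums $\sum_w\chi(w)\psi(w)$. The pair of trivial characters yields the main term $\Theta\,q^n$ with $\Theta=\frac{\phi(q^n-1)}{q^n-1}\cdot\frac{\CN(q^n)}{q^n}>0$ (the density is positive because completely normal elements exist, by~\cite{blessenohljohnsen86}); pairs in which exactly one of $\chi,\psi$ is trivial contribute $0$; and each pair with $\chi$ and $\psi$ both nontrivial contributes, after the normalising powers of $q$ cancel the order of the relevant character group, a Gauss-sum-type quantity of absolute value at most $q^{n/2}$, the number of such pairs being at most $W(q^n-1)\,\mathcal{W}_1(q,n)$, where $W(\cdot)$ counts squarefree divisors. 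Hence
\[
\PCN(q^n)\ \ge\ \Theta\bigl(q^n-q^{n/2}\,W(q^n-1)\,\mathcal{W}_1(q,n)\bigr),
\]
so it suffices to prove $q^{n/2}>W(q^n-1)\,\mathcal{W}_1(q,n)$.

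Finally I would verify this inequality under $q\ge n$. Since $W(q^n-1)=2^{O(\log(q^n)/\log\log(q^n))}$ and $\mathcal{W}_1(q,n)\le 2^{O(n)}$, while $q^{n/2}\ge n^{n/2}$, the inequality holds for all pairs $(q,n)$ with $q\ge n$ outside an explicit finite set. On that set I would invoke the Cohen--Huczynska sieve --- treating exactly only those characters whose order is supported on a small prescribed set of primes (on both the multiplicative and the module side) and absorbing the remainder with a gain --- to cut it down to a short explicit list of pairs $(q,n)$, each of which is then settled by direct computation or by the tables of Morgan and Mullen~\cite{morganmullen96} (which already dispose of all pairs with $q\le 97$ or $q^n<10^{50}$, hence in particular of everything with small $n$ and moderate $q$). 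Modulo the construction of $\mathbf{1}_{\mathrm{cn}}$, the estimation and the sieving are routine adaptations of~\cite{cohenhuczynska03}, and the hypothesis $q\ge n$ leaves ample room in the decisive inequality.
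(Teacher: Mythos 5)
The decisive gap is in your error-term bookkeeping, and it hides exactly the point where the hypothesis $q\ge n$ must be used. When you expand $\sum_w \mathbf{1}_{\mathrm{prim}}(w)\mathbf{1}_{\mathrm{cn}}(w)$, the main term is indeed $\tfrac{\phi(q^n-1)}{q^n-1}\cdot\tfrac{\CN_q(n)}{q^n}\cdot q^n$, but the coefficient multiplying each non-trivial mixed Gauss sum is governed by the normalizing constants of the two indicator functions (in the paper's notation $\theta(q')\theta(\bq)$ together with the $1/\phi$-type weights), \emph{not} by the density $\CN_q(n)/q^n$. Hence your inequality $\PCN(q^n)\ \ge\ \Theta\bigl(q^n-q^{n/2}W(q^n-1)\mathcal{W}_1(q,n)\bigr)$ with $\Theta\propto \CN_q(n)/q^n$ does not follow from the expansion; it would require something like $\theta(\bq)\le \CN_q(n)/q^n$, a correlation inequality between the normality events over the various intermediate fields that you neither state nor prove. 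Consequently the reduction to the clean condition $q^{n/2}>W(q^n-1)\mathcal{W}_1(q,n)$, in which the only input about completely normal elements is their existence \cite{blessenohljohnsen86}, is unjustified. What the correct expansion gives (Theorem~\ref{thm:our2}) is the sufficient condition $\CN_q(n) > q^{n/2}\,W(q')\prod_i W_{l_i}(F_{l_i}')\theta_{l_i}(F_{l_i}')$, so one must produce an explicit \emph{lower bound} for $\CN_q(n)$. This is the paper's Proposition~\ref{prop:cn-bound}, $\CN_q(n)\ge q^n\bigl(1-n(q+1)/q^2\bigr)$, obtained by a union bound over the divisors of $n$, and it is precisely here -- not in the comparison of $q^{n/2}$ with $2^{O(n)}$ -- that $q\ge n$ is consumed (the bound is vacuous unless $q\gtrsim n$). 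Your closing remark that ``$q\ge n$ leaves ample room in the decisive inequality'' misplaces the role of the hypothesis; as written, your argument would prove far more than the method can deliver, since nothing in it would stop working for $q$ much smaller than $n$.

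Two secondary points. First, the construction of $\mathbf{1}_{\mathrm{cn}}$ via Hachenberger's decomposition theory with $O(\log n)$ conditions and $\mathcal{W}_1(q,n)\le 2^{O(n)}$ is asserted rather than carried out, and it is also unnecessary: the paper simply multiplies the $k$ normality indicators $\varOmega_{l_i}$, one for each proper divisor $l_i$ of $n$, and bounds the number of additive character tuples by $2^{t(n)-1}$ with $t(n)$ the sum of divisors, i.e.\ $2^{O(n\log\log n)}$ by Robin's theorem, which is still dominated after applying Lemma~\ref{lemma:w(r)} to $W(q')$. Second, the finite exceptional range is handled in the paper not by the Cohen--Huczynska sieve but by sharpening the inequality with exact values, by the completely basic criterion (Theorem~\ref{thm:cbe}, Corollary~\ref{cor:cbe}), and finally by the Morgan--Mullen tables \cite{morganmullen96}, which cover pairs with $q\le 97$ \emph{and} $q^n<10^{50}$ (both conditions, not either). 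These are repairable; the missing quantitative lower bound for $\CN_q(n)$ is the genuine gap.
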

Then, we extend Theorem~\ref{thm:our1} by pushing our methods further and obtain the following generalization in Section~\ref{sec:proof-0}.
\begin{theorem}\label{thm:our0}
Let $q$ a power of the prime $p$ and $\ell,m\in\Z$ with $\ell\geq 0$, $m\geq 1$, $(m,p)=1$. 
If $n=p^{\ell}m$ and $m<q$, then there exists a primitive completely normal element of $\F_{q^n}$ over $\F_q$.
\end{theorem}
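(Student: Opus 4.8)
The plan is to re-run the character-sum machinery behind Theorem~\ref{thm:our1}, but after first pruning the list of normality conditions that must be enforced. By definition $x\in\F_{q^n}$ is completely normal over $\F_q$ exactly when it is normal over $\F_{q^d}$ for every $d\mid n$, and normality over $\F_{q^d}$ says that the $\F_{q^d}[X]$-order of $x$, with $X$ acting as the $q^d$-Frobenius $\sigma^d$, equals the full annihilator $X^{n/d}-1$. Writing $d=p^{i}r$ with $r\mid m$ and $0\le i\le\ell$, we have $n/d=p^{\ell-i}(m/r)$, so $X^{n/d}-1=(X^{m/r}-1)^{p^{\ell-i}}$ with $X^{m/r}-1$ separable. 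The first step would be the reduction lemma: \emph{$x$ is completely normal over $\F_q$ if and only if $x$ is normal over $\F_{q^r}$ for every $r\mid m$}. This is known from the theory of completely free elements, but it also follows directly from the characteristic-$p$ identity $\sigma^{pd}-1=(\sigma^d-1)^p$: decomposing $\F_{q^n}$ into the $f$-primary components attached to the irreducible factors $f$ of $X^{m/r}-1$ over $\F_{q^r}$, one checks that if the $\F_{q^r}[X]$-order of $x$ is maximal then no proper divisor of $(Y^{m/r}-1)^{p^{\ell-i}}$, with $Y=\sigma^{p^{i}r}=X^{p^{i}}$, can annihilate $x$, because any such divisor, rewritten as an operator in $X$, fails to kill at least one primary component. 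Thus only the $\tau(m)$ conditions indexed by $r\mid m$ matter, and each involves solely the separable polynomial $X^{m/r}-1$ of degree $m/r\le m$.

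Next I would assemble the indicator function. Writing $N$ for the number of primitive completely normal elements, I would use
\[
N=\sum_{x\in\F_{q^n}^{*}}\Omega_{\mathrm{prim}}(x)\prod_{r\mid m}\Omega_{r}(x),
\]
where $\Omega_{\mathrm{prim}}$ detects primitivity and $\Omega_{r}$ detects normality over $\F_{q^r}$. For $\Omega_{\mathrm{prim}}$ I would expand in multiplicative characters of $\F_{q^n}^{*}$, weighted by $\mu(e)/\varphi(e)$ over the divisors $e$ of $q^n-1$. For $\Omega_{r}$ I would use the additive analogue over $\F_{q^r}[X]$: the indicator of elements whose $\F_{q^r}[X]$-order is $X^{n/r}-1$ is a sum, weighted by the polynomial Möbius function, over the divisors $h$ of $X^{n/r}-1$, and since that Möbius function vanishes on non-squarefree arguments only the $h\mid X^{m/r}-1$ survive. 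Hence every additive character entering the expansion has conductor dividing $X^{m/r}-1$, of degree at most $m/r\le m$. This is exactly the source of the hypothesis $q>m$ in place of $q>n$.

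Multiplying out, the term in which all characters are trivial yields a main term $\delta\,q^{n}$ with $\delta>0$ built from $\varphi(q^n-1)/(q^n-1)$ and, for each $r\mid m$, the density of free elements in $\F_{q^r}[X]/(X^{n/r}-1)$; every other term is a complete character sum over $\F_{q^n}$ of a multiplicative character times an additive character evaluated at a polynomial whose degree is controlled by $m$. By the Weil bounds each such sum has modulus at most $C\sqrt{q^{n}}$, with $C$ depending on $m$ alone, through $\tau(m)$ and the degrees $m/r$. Collecting, $N>0$ whenever $q^{n/2}$ dominates the resulting combinatorial factor, and since $n\ge m$ this becomes an inequality in $q$ and $m$ that I would close for $q>m$ by inserting a sieving step in the style of Cohen and Huczynska to reduce the combinatorial factor, together with a finite verification for the remaining small pairs $(q,m)$.

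The main obstacle is precisely this closing estimate: pushing the argument down from $q>n$ to $q>m$. The gain is entirely due to the reduction lemma, which replaces the $\tau(n)=(\ell+1)\tau(m)$ a~priori conditions by the $\tau(m)$ conditions attached to the divisors of $m$ and makes every error contribution independent of $\ell$; without it the combinatorial weight of the error term grows with $\ell$, hence with $n$, and the method cannot reach below $q\gtrsim n$. The remaining work---carefully bounding the sieved error term so that it is beaten by $q^{n/2}$ for all $q>m$, and dispatching the finitely many small parameters the asymptotics do not cover---is the technical core of the proof.
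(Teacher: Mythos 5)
Your central step --- the claimed ``reduction lemma'' that $x$ is completely normal over $\F_q$ if and only if it is normal over $\F_{q^r}$ for every $r\mid m$ --- is false, and the justification you sketch for it does not work: a proper divisor $g(Y)$ of $(Y^{m/r}-1)^{p^{\ell-i}}$ taken over $\F_{q^{p^ir}}$ has coefficients in the \emph{larger} field, so rewriting $Y=X^{p^i}$ does not produce an element of $\F_{q^r}[X]$, and maximality of the $\F_{q^r}[X]$-order of $x$ says nothing about whether such a $g$ annihilates $x$ (the primary decompositions over the two coefficient fields do not match, since the finer Frobenius is only semilinear over the larger field). Concrete counterexample: take $q=p=2$, $\ell=1$, $m=3$, $n=6$. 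Write $\F_{64}=V_1\oplus V_\omega\oplus V_{\omega^2}$ for the eigenspace decomposition of $\sigma^2\colon z\mapsto z^4$ over $\F_4$, where $\omega\in\F_4\setminus\F_2$. Let $y\neq 0$ satisfy $y^4=\omega^2y$ (i.e.\ $y^3=\omega^2$, an element of multiplicative order $9$) and put $x=\omega+y$. Then $x$ is normal over $\F_2$: its $(X+1)^2$-primary component is $\omega\notin\F_2$, and its $(X^2+X+1)^2$-primary component is $y$ with $(\sigma^2+\sigma+1)y=y^2+\omega y=y(y+\omega)\neq0$. It is also normal over $\F_8$: since $\sigma^3 y=\omega y^2\in V_\omega$, the $V_{\omega^2}$-component of $\sigma^3x$ is $0\neq y$, so $x\notin\F_8$, which is exactly normality in a degree-$p$ extension. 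But $x$ is \emph{not} normal over $\F_4$, because its $V_\omega$-component vanishes. So the conditions indexed by $r\mid m$ alone do not capture complete normality, and your plan of discarding the divisors $p^jd$ with $j\geq1$ collapses.

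The part of your analysis that is correct --- that in the normality indicator over $\F_{q^{p^jd}}$ only additive characters of order dividing the square-free part $X^{m/d}-1$ (degree at most $m$) occur --- is indeed the true source of the improvement to $q>m$, but it does not require dropping any conditions. The paper keeps all $(\ell+1)\tau(m)-1$ normality conditions; the resulting character-count factor is at most $2^{(\ell+1)t(m)-1}$ (with $t(m)$ the sum of divisors of $m$), which still grows with $\ell$ but is harmless because the main term $q^{n/2}=q^{p^{\ell}m/2}$ grows far faster in $\ell$. Hence your assertion that without the reduction lemma the error term forces $q\gtrsim n$ is also mistaken. What is genuinely needed instead is a lower bound for $\CN_q(n)$ whose defect is of size roughly $m/q$ rather than $n/q$ (obtained from $\phi_{p^jd}\bigl((X^{m/d}-1)^{p^{\ell-j}}\bigr)\geq q^n(1-q^{-dp^j})^{m/d}$, as in Proposition~\ref{prop:cn-bound}), followed by an explicit closing of the inequality, separately for $p=2$ and $p>2$, together with the treatment of the finitely many exceptional triples $(\ell,m,q)$ --- precisely the steps your sketch leaves open.
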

Our method is based on the work of Lenstra and Schoof \cite{lenstraschoof87}. In particular, we give sufficient conditions, for our existence results, that are progressively easier to check, but harder to satisfy. This way, me manage to prove our theorems theoretically for all pairs $(n,q)$ that satisfy the stated conditions, with the exception of 18 resilient pairs. For all those pairs, however, examples of primitive and completely normal elements have been given in \cite{morganmullen96}. For the reader's convenience, these pairs are displayed in Table~\ref{table:exceptions}.
\section{Preliminaries}\label{sec:prelim}
Before we move on to our results, we note that, in addition to the special cases mentioned in \cite{hachenberger13}, the case when $\F_{q^n}$ is completely basic over $\F_q$ can be excluded from our calculations. Namely, $\F_{q^n}$ is \emph{completely basic over $\F_q$} if every normal element of $F_{q^n}$ is also completely normal over $\F_q$ and it is clear that in that case, Theorem~\ref{thm:pnbt} implies Conjecture~\ref{conj:mm}. Furthermore, we can characterize such extensions using the following, see \cite[Theorem~5.4.18]{hachenberger13} and, for a  proof, see \cite[Section~15]{hachenberger97}.
\begin{theorem}\label{thm:cbe}
Let $q$ be a power of the prime $p$. $\F_{q^n}$ is completely basic over $\F_q$ if and only if for every prime divisor $r$ of $n$, $r \nmid \ord_{(n/r)'}(q)$, where $(n/r)'$ stands for the $p$-free part of $n/r$ and $\ord_{(n/r)'}(q)$ for the multiplicative order of $q$ modulo $(n/r)'$.
\end{theorem}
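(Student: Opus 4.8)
The plan is to recast everything in terms of Frobenius modules and then to localise the problem at each prime of $n$. Fix the $q$-Frobenius $\sigma\colon x\mapsto x^{q}$ of $\F_{q^n}$; for each divisor $d$ of $n$ regard $\F_{q^n}$ as a module over $\F_{q^d}[T]$ with $T$ acting as $\sigma^{d}$, so that it is annihilated by $T^{n/d}-1$ and is in fact $\F_{q^d}[T]$-cyclic, with the $\F_{q^d}$-normal elements as its free generators. Writing $n=p^{\ell}m$ with $(m,p)=1$ and $n'$ for the $p$-free part of $n$ (so $n'=m$), over $\F_{q^d}$ one has $T^{n/d}-1=(T^{(n/d)'}-1)^{p^{a}}$ with $a=\ord_{p^\infty}$-valuation of $n/d$, and $T^{(n/d)'}-1=\prod_{t\mid(n/d)'}\Phi_t(T)$, each $\Phi_t$ splitting over $\F_{q^d}$ into $\phi(t)/\ord_t(q^d)$ irreducibles of degree $\ord_t(q^d)$. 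Consequently the elements of $\F_{q^n}$ that are not normal over $\F_{q^d}$ form exactly the union of the maximal $\F_{q^d}[\sigma^{d}]$-submodules of $\F_{q^n}$. Thus $\F_{q^n}$ is completely basic over $\F_q$ precisely when, for every $d\mid n$, every maximal $\F_{q^d}[\sigma^{d}]$-submodule of $\F_{q^n}$ lies inside the union of the maximal $\F_q[\sigma]$-submodules of $\F_{q^n}$ — equivalently, modulo a covering argument over the (possibly small) field $\F_q$, inside one of them.

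I would prove the ``if'' part — that the numeric condition for all primes $r\mid n$ forces complete basicity — by induction on $n$, the case $n$ prime being trivial. Granting the following \emph{Key Lemma} — \emph{if $r$ is a prime dividing $n$ and $r\nmid\ord_{(n/r)'}(q)$, then every element of $\F_{q^n}$ normal over $\F_q$ is normal over $\F_{q^{r}}$} — fix an $\F_q$-normal $\alpha$. For each prime $r\mid n$ it is $\F_{q^{r}}$-normal by the Key Lemma; a short computation with $\ord_s(q^{r})=\ord_s(q)/\gcd(\ord_s(q),r)$ and the evident divisibilities of $p$-free parts shows that the numeric condition of the theorem is inherited by the degree-$(n/r)$ extension $\F_{q^n}/\F_{q^{r}}$, so by the induction hypothesis $\F_{q^n}$ is completely basic over $\F_{q^{r}}$, whence $\alpha$ is normal over $\F_{q^{d}}$ for every $d$ with $r\mid d\mid n$. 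Letting $r$ range over the primes dividing $n$ covers every $d\mid n$, so $\alpha$ is completely normal. For the ``only if'' part, when $r\mid\ord_{(n/r)'}(q)$ for some prime $r\mid n$, I would exhibit an $\F_q$-normal element lying inside a maximal $\F_{q^{r}}[\sigma^{r}]$-submodule of $\F_{q^n}$, hence not $\F_{q^{r}}$-normal.

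The technical heart, for both directions, is the module-containment dichotomy: \emph{$r\nmid\ord_{(n/r)'}(q)$ if and only if every maximal $\F_{q^{r}}[\sigma^{r}]$-submodule of $\F_{q^n}$ is contained in a maximal $\F_q[\sigma]$-submodule.} I would establish this by tracking cyclotomic factorisations over $\F_q$ versus over $\F_{q^{r}}$. When $r\ne p$, a maximal $\F_{q^{r}}[\sigma^{r}]$-submodule is the kernel of a projection of $\F_{q^n}$ onto a simple quotient $\F_{q^{r}}(\zeta)$ for a root of unity $\zeta$ of order $t\mid(n/r)'$, and one must decide when that kernel is killed by $f(\sigma)$ for some $\F_q$-irreducible $f\mid T^{n'}-1$; substituting $T\mapsto T^{r}$ turns the $\F_{q^{r}}$-conjugacy class of $\zeta$ into the $\F_q$-conjugacy class of an $r$-th root of $\zeta$ (itself a root of unity of order dividing $n'$), and the condition $r\nmid\ord_{(n/r)'}(q)$ is exactly what makes these classes — hence the two families of maximal submodules — compatible. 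When $r=p$, the hypothesis $p\nmid\ord_m(q)$ forces $T^{m}-1$ to factor identically over $\F_q$ and over $\F_{q^{p}}$, and the comparison collapses to the elementary fact that in $\F_q[T]/(f(T)^{p^{\ell}})$ one has $f(T^{p})\in(f(T))$ for every irreducible $f\mid T^{m}-1$; this case is the easier one. Granting the dichotomy, the Key Lemma is immediate, and the ``only if'' construction picks a maximal $\F_{q^{r}}[\sigma^{r}]$-submodule $M_0$ contained in no maximal $\F_q[\sigma]$-submodule and then an element of $M_0$ outside the finitely many proper subspaces $M_0\cap(\text{maximal }\F_q[\sigma]\text{-submodule})$.

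The step I expect to be the main obstacle is the ``$r\ne p$'' half of the dichotomy: the bookkeeping of how the irreducible factors of $T^{(n/r)'}-1$ over $\F_{q^{r}}$ distribute over those of $T^{n'}-1$ over $\F_q$, and the verification that $r\mid\ord_{(n/r)'}(q)$ is the exact obstruction, requires a careful hands-on analysis of multiplicative orders of $q$ modulo the relevant moduli and of how $r$-th roots interact with conjugacy. A secondary nuisance is the covering argument that upgrades ``contained in a union of maximal $\F_q[\sigma]$-submodules'' to ``contained in a single one'': this is harmless once $q$ exceeds the number of such submodules, but — since the statement is fully general in $q$ — small $q$ must be handled directly, e.g.\ via a sharper lower bound on the proportion of normal elements inside the relevant submodule.
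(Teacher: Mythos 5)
First, a point of reference: the paper does not prove Theorem~\ref{thm:cbe} at all — it is quoted from the literature (\cite[Theorem~5.4.18]{hachenberger13}, with the proof in \cite[Section~15]{hachenberger97}), so your attempt has to be judged against Hachenberger's cited proof rather than anything in this article.

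Your framework (cyclic $\F_{q^d}[\sigma^d]$-module structures, non-normal elements as the union of the maximal submodules) is correct, and the induction skeleton for the ``if'' direction does close: the numeric condition is indeed inherited by $\F_{q^n}/\F_{q^r}$, since $(n/(rs))'$ divides $(n/s)'$ and $\ord_{(n/(rs))'}(q^r)$ divides $\ord_{(n/s)'}(q)$. But the proof is not there, because everything is delegated to your ``module-containment dichotomy'', and that dichotomy \emph{is} the substance of the theorem. Concretely: (i) for $r\neq p$ you only gesture at ``substituting $T\mapsto T^r$'' and yourself flag the bookkeeping as the main obstacle; nothing in the proposal establishes that $r\nmid\ord_{(n/r)'}(q)$ forces each maximal $\F_{q^r}[\sigma^r]$-submodule into a \emph{single} maximal $\F_q[\sigma]$-submodule, nor the converse. (ii) The ``only if'' direction needs an $\F_q$-normal element inside a maximal $\F_{q^r}[\sigma^r]$-submodule $M_0$, and over a finite field ``$M_0$ lies in no single maximal $\F_q[\sigma]$-submodule'' does not yield an element of $M_0$ outside their union: the number of maximal $\F_q[\sigma]$-submodules (one for each distinct irreducible factor of $X^n-1$ over $\F_q$) can far exceed $q$, the theorem is claimed for all $q$, and the counting of normal elements inside $M_0$ that you invoke to fix this is not carried out. (iii) The one step you do state precisely — the ``elementary fact'' for $r=p$ that $f(T^p)\in(f(T))$ for every irreducible $f\mid T^m-1$ — is false: take $q=4$, $p=2$, $f(T)=T-\omega$ with $\omega^2+\omega+1=0$; then $p\nmid\ord_3(4)=1$, yet $f(T^2)=T^2-\omega=(T-\omega^2)^2$ is not divisible by $T-\omega$. (This case is repairable: the maximal $\F_{q^p}[\sigma^p]$-submodule attached to $g$ is contained in the maximal $\F_q[\sigma]$-submodule attached to the irreducible factor whose roots are the $p$-th roots of the roots of $g$, not to $g$ itself.) As it stands, then, the proposal is a reasonable programme whose technical heart is missing and whose only explicitly stated key step is wrong; filling it in would essentially amount to reconstructing the relevant part of Hachenberger's theory, which is why the paper simply cites it.
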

With the above and Theorem~\ref{thm:pnbt} in mind, it is straightforward to check the validity of the following:
\begin{corollary}\label{cor:cbe}
Let $q$ be a power of the prime $p$ and $n=p^\ell m$, where $\ell\geq 0$ and $(m,p)=1$. If
\begin{enumerate}
\item $m\mid q-1$ or
\item $m=1$ or
\item $n=r$ or $n=r^2$ for some prime $r$,
\end{enumerate}
then $\F_{q^n}$ is completely basic over $\F_q$ and there exists a primitive complete normal element of $\F_{q^n}$ over $\F_q$.
\end{corollary}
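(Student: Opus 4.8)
The plan is to verify, in each of the three listed cases, that the hypothesis of Theorem~\ref{thm:cbe} is satisfied, so that $\F_{q^n}$ is completely basic over $\F_q$; the existence of a primitive completely normal element then follows immediately by combining this with the primitive normal basis theorem (Theorem~\ref{thm:pnbt}), since in a completely basic extension every normal element is automatically completely normal. So the whole task reduces to the elementary number-theoretic claim: under any of conditions (1)--(3), for every prime divisor $r$ of $n$ we have $r\nmid \ord_{(n/r)'}(q)$.

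Case~(1), $m\mid q-1$: here the $p$-free part of $n$ divides $m$, which divides $q-1$, so for every prime $r\mid n$ we have $(n/r)'\mid m \mid q-1$, whence $q\equiv 1 \pmod{(n/r)'}$ and $\ord_{(n/r)'}(q)=1$, which is obviously not divisible by $r$. Case~(2), $m=1$: then $n=p^\ell$, and the only prime divisor of $n$ is $r=p$; but $(n/r)'=(p^{\ell-1})'=1$, so $\ord_1(q)=1$, again not divisible by $r=p$. (If $\ell=0$ there is nothing to check.) Case~(3) splits into two subcases. If $n=r$ for a prime $r$, then $(n/r)'=1'=1$ and $\ord_1(q)=1$ is not divisible by $r$. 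If $n=r^2$, the only prime divisor is $r$, and $n/r=r$, so $(n/r)'$ is $r$ if $r\ne p$ and $1$ if $r=p$; in the latter case $\ord_1(q)=1$ is not divisible by $r$, while in the former $\ord_r(q)\mid r-1$, and $r\nmid r-1$, so again the divisibility fails. In all cases the condition of Theorem~\ref{thm:cbe} holds.

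Having established that $\F_{q^n}$ is completely basic in each case, I would then invoke Theorem~\ref{thm:pnbt} to produce $x\in\F_{q^n}$ that is primitive and normal over $\F_q$; by the definition of completely basic, $x$ is then completely normal over $\F_q$ as well, which is the assertion. I do not expect any genuine obstacle here: the only mildly delicate point is bookkeeping in case~(3), distinguishing $r=p$ from $r\ne p$ and handling $n=r$ versus $n=r^2$, and making sure the degenerate cases ($\ell=0$ in case~(2), or $n$ itself prime) are covered by the convention $\ord_1(q)=1$. The statement is really a direct corollary, so the ``proof'' amounts to these short verifications together with the citation of the two quoted theorems.
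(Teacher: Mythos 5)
Your proposal is correct and is exactly the argument the paper intends: the paper gives no explicit proof, stating only that the corollary is ``straightforward to check'' from Theorem~\ref{thm:cbe} together with Theorem~\ref{thm:pnbt}, and your case-by-case verification that $r\nmid\ord_{(n/r)'}(q)$ (including the $r=p$ versus $r\neq p$ bookkeeping and the convention $\ord_1(q)=1$) fills in precisely those details. No discrepancies with the paper's approach.
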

The above results imply that $\F_{q^n}$ is completely basic over $\F_q$ when $n\leq 5$ or $m=1$.
They also imply Theorems~\ref{thm:our1} and \ref{thm:our0} for $q\leq 5$. This is straightforward to check for Theorem~\ref{thm:our1}, as our cases of interest are $q>n$ and the case $n\leq 5$ is already settled. For Theorem~\ref{thm:our0} we demonstrate the case $q=5$ and we notice that the cases $q\leq 4$ are proven in a similar way. Write $n=5^\ell m$, where $(m,5)=1$. Since our cases of interest are $m<q$, we get that $m=1,2,3$ or $4$. The cases $m=1,2$ and $4$ are covered directly from Corollary~\ref{cor:cbe}, while the case $m=3$, i.e. when $n=5^\ell 3$, satisfies the conditions of Theorem~\ref{thm:cbe}, hence in any case $\F_{5^{5^\ell m}}$ is completely basic over $\F_5$. Summing up, from now on we may assume that $q\geq 7$, $n\geq 6$ and $m>1$.

Characters and their sums play a crucial role in characterizing elements of finite fields with the desired properties and in estimating the number of elements that combine all the desired properties.
\begin{definition}
Let $\mathfrak{G}$ be a finite abelian group. A \emph{character} of
$\mathfrak{G}$ is a group homomorphism $\mathfrak{G} \to \mathbb{C}^*$. The characters of $\mathfrak{G}$ form a group under multiplication,
which is isomorphic to $\mathfrak{G}$. This group is called the \emph{dual} of
$\mathfrak{G}$ and denoted by $\widehat{\mathfrak{G}}$. Furthermore, the
character $\chi_0 : \mathfrak{G} \to \mathbb{C}^*$, where $\chi_0(g) = 1$ for
all $g\in \mathfrak{G}$, is called the \emph{trivial
character} of $\mathfrak{G}$. Finally, by $\bar\chi$ we denote the inverse of $\chi$.
\end{definition}
The finite field $\F_{q^n}$ is associated with its multiplicative and its additive group. From now on, we will call the characters of $\F_{q^n}^*$ \emph{multiplicative
characters} and the characters of $\F_{q^n}$ \emph{additive characters}.
Furthermore, we will denote by $\chi_0$ and $\psi_0$ the trivial multiplicative
and additive character respectively and we will extend the multiplicative
characters to zero with the rule
\[ \chi(0) := \begin{cases} 0, & \text{if }\chi\in\widehat{\F_{q^n}^*}
\setminus \{ \chi_0 \} , \\
1, & \text{if } \chi = \chi_0 . \end{cases} \]
A \emph{character sum} is a sum that involves characters. In this work we will use the following well-known results on character sums.
\begin{lemma}[Orthogonality relations]\label{lemma:orthogonality}
Let $\chi$ be a non-trivial character of a group $\mathfrak{G}$ and $g$ a
non-trivial element of $\mathfrak{G}$. Then
\[ \sum_{x\in\mathfrak{G}} \chi(x) = 0 \quad \text{and} \quad
\sum_{\chi\in\widehat{\mathfrak{G}}} \chi(g) = 0 . \]
\end{lemma}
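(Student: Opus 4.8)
The plan is to prove both identities by the standard ``averaging/shift'' argument. For the first sum, set $S = \sum_{x\in\mathfrak{G}} \chi(x)$. Since $\chi$ is non-trivial, choose $y\in\mathfrak{G}$ with $\chi(y)\neq 1$. As the map $x\mapsto yx$ is a bijection of $\mathfrak{G}$, we have $\chi(y)\,S = \sum_{x\in\mathfrak{G}}\chi(yx) = \sum_{z\in\mathfrak{G}}\chi(z) = S$, so $(\chi(y)-1)S = 0$ and hence $S=0$.

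For the second sum, the same computation works verbatim in the dual group, \emph{provided} we know that the characters of $\mathfrak{G}$ separate its points, i.e. that for the given non-trivial $g$ there is some $\psi\in\widehat{\mathfrak{G}}$ with $\psi(g)\neq 1$. Granting this, put $T = \sum_{\chi\in\widehat{\mathfrak{G}}}\chi(g)$; since $\chi\mapsto\psi\chi$ permutes $\widehat{\mathfrak{G}}$, we get $\psi(g)\,T = \sum_{\chi}(\psi\chi)(g) = T$, whence $(\psi(g)-1)T = 0$ and $T = 0$.

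The only substantive point is therefore the separation property. Here the structural fact recalled in the Definition — that $\widehat{\mathfrak{G}}\cong\mathfrak{G}$ for every finite abelian group — does the work: applied to $\widehat{\mathfrak{G}}$ it gives $\widehat{\widehat{\mathfrak{G}}}\cong\mathfrak{G}$, so the evaluation homomorphism $\mathfrak{G}\to\widehat{\widehat{\mathfrak{G}}}$, $g\mapsto\bigl(\chi\mapsto\chi(g)\bigr)$, is a map between finite groups of equal order, hence injective iff surjective; injectivity is exactly separation. To close the loop without circularity one reduces to the cyclic case: write $\mathfrak{G}\cong\prod_i \Z/d_i\Z$ via the structure theorem, note a character of $\Z/d\Z$ is determined by sending a generator to any $d$-th root of unity, and for a non-trivial $g$ pick a coordinate where its image is non-trivial and a root of unity of maximal order on that factor. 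I expect no real obstacle; the only care needed is to argue the cyclic case explicitly rather than invoking Pontryagin duality as a black box. (Indeed, one could bypass double duality and simply run the shift argument directly on such a cyclic decomposition, where all characters are given by explicit formulas.)
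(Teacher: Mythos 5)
Your proof is correct. Note, however, that the paper offers no proof of this lemma at all: it is stated as a well-known fact about character sums (the standard reference being, e.g., Lidl--Niederreiter), so there is nothing in the paper to compare against. Your argument is the classical one: the shift trick disposes of both sums, and the only genuine content is the separation property for the dual group, which you correctly reduce to the cyclic case via the structure theorem (for a nontrivial coordinate $k$ in $\Z/d\Z$, a primitive $d$-th root of unity $\zeta$ gives $\zeta^k\neq 1$), thereby avoiding any circular appeal to double duality. One small implicit hypothesis worth making explicit: the lemma as used in the paper concerns finite abelian groups (the multiplicative and additive groups of $\F_{q^n}$), and your separation argument indeed requires abelianness; for the first sum alone, finiteness of the character's image suffices.
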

\begin{lemma}[Gauss sums]\label{lemma:gauss}
Let $\chi$ be a non-trivial multiplicative character and $\psi$ be a non-trivial additive character. Then
\[
\left| \sum_{x\in\F_{q^n}} \chi(x)\psi(x) \right| = q^{n/2} .
\]
\end{lemma}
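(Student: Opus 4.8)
The plan is to compute the square of the modulus of the Gauss sum directly and to reduce everything to the orthogonality relations of Lemma~\ref{lemma:orthogonality}. Write $G=\sum_{x\in\F_{q^n}}\chi(x)\psi(x)$. Since $\chi$ is non-trivial, the convention $\chi(0)=0$ kills the term $x=0$, so in fact $G=\sum_{x\in\F_{q^n}^*}\chi(x)\psi(x)$, and it suffices to prove $G\bar G=q^n$.

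The first step is to expand $G\bar G=\sum_{x,y\in\F_{q^n}^*}\chi(x)\overline{\chi(y)}\,\psi(x)\overline{\psi(y)}$. Using $\overline{\chi(y)}=\chi(y^{-1})$ and $\overline{\psi(y)}=\psi(-y)$, this equals $\sum_{x,y\in\F_{q^n}^*}\chi(xy^{-1})\psi(x-y)$. Performing the substitution $x=yz$, which is legitimate because $y$ runs over $\F_{q^n}^*$, turns this into $\sum_{z\in\F_{q^n}^*}\chi(z)\sum_{y\in\F_{q^n}^*}\psi\bigl(y(z-1)\bigr)$.

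The next step is to evaluate the inner sum. For $z=1$ it equals $q^n-1$; for $z\neq 1$ the map $y\mapsto y(z-1)$ permutes $\F_{q^n}^*$, so by the additive orthogonality relation the inner sum equals $\sum_{w\in\F_{q^n}^*}\psi(w)=\sum_{w\in\F_{q^n}}\psi(w)-\psi(0)=-1$. Substituting back, $G\bar G=(q^n-1)-\sum_{z\in\F_{q^n}^*,\,z\neq 1}\chi(z)=(q^n-1)-\bigl(\sum_{z\in\F_{q^n}^*}\chi(z)-\chi(1)\bigr)=(q^n-1)-(0-1)=q^n$, where the penultimate equality uses the multiplicative orthogonality relation for the non-trivial character $\chi$. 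Taking square roots gives $|G|=q^{n/2}$.

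The only point requiring care — it is hardly an obstacle — is the bookkeeping around the term $x=0$ and the $-1$ contributions coming from $\psi(0)=1$ and $\chi(1)=1$ when one passes between sums over $\F_{q^n}$ and over $\F_{q^n}^*$; once these are tracked consistently, the value of $G\bar G$ is forced entirely by Lemma~\ref{lemma:orthogonality}.
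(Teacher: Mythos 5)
Your proof is correct: the expansion of $G\bar G$, the substitution $x=yz$, and the evaluation of the inner sum via the orthogonality relations are all handled properly, including the bookkeeping of the terms $\chi(0)=0$, $\psi(0)=1$ and $\chi(1)=1$. Note that the paper itself offers no proof of this lemma — it is quoted as a well-known fact about Gauss sums (cf.\ \cite[Theorem~5.11]{lidlniederreiter97}) — and the argument you give is precisely the standard textbook computation of $|G|^2=q^n$, so it fills in the omitted details rather than diverging from the paper.
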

The additive and the multiplicative groups of $\F_{q^n}$ can also be seen as modules. In particular $\F_{q^n}^*$ (the multiplicative group) can be seen as a $\Z$-module and $\F_{q^n}$ (the additive group) as a $\F_{q^l}[X]$-module, where $l\mid n$, under the rules $r\circ x = x^r$ and $F\circ x = \sum_{i=0}^k F_ix^{q^{li}}$, where $r\in\Z$ and $F(X)=\sum_{i=0}^kF_iX^i\in\F_{q^l}[X]$. Since both primitive elements and  normal elements over $\F_{q^l}$ are known to exist, it follows that both modules are cyclic.

Let $q'$ be the square-free part of $q^n-1$. The characteristic function for primitive elements of $\F_{q^n}$
is given by Vinogradov's formula
\[
\omega(x) := \theta(q') \sum_{d\mid q'} \frac{\mu(d)}{\phi(d)} \sum_{\chi\in\widehat{\F_{q^n}^*},\ \ord(\chi)=d} \chi(x),
\]
where $\theta(q')=\phi(q')/q'$, $\mu$ is the M\"obius function, $\phi$ is the Euler function and the \emph{order} of a multiplicative character is defined as its multiplicative order in $\widehat{\F_{q^n}^*}$. Similarly, the characteristic function for elements of $\F_{q^n}$ that are normal over $\F_{q^l}$ is
\[
\varOmega_l(x) := \theta_l(X^{n/l}-1) \sum_{F\mid X^{n/l}-1} \frac{\mu_l(F)}{\phi_l(F)} \sum_{\psi\in\widehat{\F_{q^n}}, \ \ord_l(\psi)=F} \psi(x) ,
\]
where $\theta_l(X^{n/l}-1):= \phi_l(F_l')/q^{l\cdot\deg(F_l')}$, $F_l'$ is the square-free part of $X^{n/l}-1\in\F_{q^l}[X]$, $\mu_l$ and $\phi_l$ are the M\"obius and Euler functions in $\F_{q^l}[X]$, the first sum extends over the monic divisors of $X^{n/l}-1$ in $\F_{q^l}[X]$ and the second sum runs through the additive characters of $\F_{q^n}$ of order $F$ over $\F_{q^l}$. The \emph{order} of an additive character of $\F_{q^n}$ over $\F_{q^l}$, denoted as $\ord_l$, is defined as the lowest degree monic polynomial $G\in\F_{q^l}[X]$ such that $\psi( G\circ x ) = 1$ for all $x\in\F_{q^n}$. We note that the order of an additive character of $\F_{q^n}$ over $\F_{q^l}$ always divides $X^{n/l}-1$ in $\F_{q^l}[X]$%, so all additive characters of $\F_{q^n}$ appear in the definition of $\varOmega_l$
. Furthermore, an additive or a multiplicative character has order equal to $1$ if and only if it is the trivial character% and 
%\[
%\sum_{\ord(\chi)=d}1=\phi(d) \quad\text{and}\quad \sum_{\ord_l(\psi)=F}1=\phi_l(F) ,
%\]
%for any $d\mid q^n-1$ and $F \mid X^{n/l}-1$ in $\F_{q^l}[X]$
.
It is easy to see that the above characteristic functions can be written in the following more compact form, which we will use later
\begin{eqnarray*}
  \omega(x) &=& \theta(q') \sum_{\chi\in\widehat{\F_{q^n}^*},\ \ord(\chi) \mid q'}\frac{\mu(\ord(\chi))}{\phi(\ord(\chi))} \chi(x), \\
  \varOmega_l(x) &=& \theta_l(X^{n/l}-1) 
    \sum_{\psi\in\widehat{\F_{q^n}}} \frac{\mu_l(\ord_l(\psi))}{\phi_l(\ord_l(\psi))} \psi(x).
\end{eqnarray*} 

Let $\PCN_q(n)$ be the number of primitive completely normal elements of $\F_{q^n}$ over $\F_q$
and $\CN_q(n)$ be the number of completely normal elements of $\F_{q^n}$ over $\F_q$. Let $\{1=l_1<\ldots <l_k<n\}$ be the set
of proper divisors of $n$. Since all $x\in\F_{q^n}^*$ are normal over $\F_{q^n}$, it follows that an element of $\F_{q^n}$ is
completely normal over $\F_q$ if and only if it is normal over $\F_{q^{l_i}}$ for all $i=1,\ldots ,k$.
To simplify our notation, we denote
$\bq=(X^{n/l_1}-1,\ldots,X^{n/l_k}-1)$ and $\theta(\bq)=\prod_{i=1}^k \theta_{l_i}(X^{n/l_i}-1)$. We compute
\begin{eqnarray*}\label{eq:cn}
\CN_q(n) &=& \sum_{x\in\F_{q^n}} \varOmega_{l_1}(x) \cdots \varOmega_{l_{k}}(x) \\
&=& \theta(\bq) \sum_{(\psi_1,\ldots,\psi_k)}
\prod_{i=1}^k\frac{\mu_{l_i}(\ord_{l_i}(\psi_i))}{\phi_{l_i}(\ord_{l_i}(\psi_i))}
\sum_{x\in\F_{q^n}} \psi_1\cdots\psi_k(x) ,
\end{eqnarray*}
where the sums extends over all $k$-tuples of additive characters. Noting that 
\[
\sum_{x\in\F_{q^n}} \psi_1\cdots\psi_k(x) =0,\ \ \ \mbox{ for }\ \psi_1\cdots\psi_k\neq \psi_0,
\]
we obtain
\[
  \CN_q(n) = q^n\ \theta(\bq) \sum_{\substack{(\psi_1,\ldots,\psi_k)\\ \psi_1\cdots\psi_k=\psi_0}}
\prod_{i=1}^k\frac{\mu_{l_i}(\ord_{l_i}(\psi_i))}{\phi_{l_i}(\ord_{l_i}(\psi_i))}.
\]
\section{Sufficient conditions}\label{sec:main}
In this section we prove some sufficient conditions that ensure $\PCN_q(n)>0$.
\begin{theorem}\label{thm:our2}
Let $q$ be a prime power, $n\in\N$, then
\[
|\PCN_q(n) - \theta(q')\CN_q(n)| \leq q^{n/2} W(q')W_{l_1}(F_{l_1}') \cdots W_{l_k}(F_{l_k}')\theta(q')\theta(\bq) ,
\]
where $W(q')$ is the number of positive divisors of $q'$ and $W_{l_i}(F_{l_i}')$ is the number of monic divisors of $F_{l_i}'$ in $\F_{q^{l_i}}[X]$.
\end{theorem}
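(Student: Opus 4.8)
The starting point is the characteristic function $\omega(x)$ for primitive elements, combined with the combined normality characteristic function that already appeared in the computation of $\CN_q(n)$. The plan is to write
\[
\PCN_q(n) = \sum_{x\in\F_{q^n}} \omega(x)\,\varOmega_{l_1}(x)\cdots\varOmega_{l_k}(x),
\]
expand $\omega(x)$ via its compact form $\theta(q')\sum_{\ord(\chi)\mid q'} \frac{\mu(\ord(\chi))}{\phi(\ord(\chi))}\chi(x)$, and expand each $\varOmega_{l_i}$ likewise. This turns $\PCN_q(n)$ into $\theta(q')\theta(\bq)$ times a double sum over a multiplicative character $\chi$ (of order dividing $q'$) and a $k$-tuple $(\psi_1,\ldots,\psi_k)$ of additive characters, weighted by the Möbius-type coefficients, of the inner sum $\sum_{x\in\F_{q^n}} \chi(x)\psi_1\cdots\psi_k(x)$. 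The term coming from $\chi=\chi_0$ reproduces exactly $\theta(q')\CN_q(n)$ (after using the orthogonality relation to kill the summands with $\psi_1\cdots\psi_k\neq\psi_0$, as was done in Section~\ref{sec:prelim}). So the quantity $\PCN_q(n)-\theta(q')\CN_q(n)$ equals $\theta(q')\theta(\bq)$ times the sum of the remaining terms, those with $\chi\neq\chi_0$.

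For those remaining terms, set $\psi:=\psi_1\cdots\psi_k$. If $\psi$ is nontrivial, then $\sum_{x\in\F_{q^n}}\chi(x)\psi(x)$ is a Gauss sum and Lemma~\ref{lemma:gauss} bounds its modulus by $q^{n/2}$; if $\psi=\psi_0$ is trivial, then since $\chi\neq\chi_0$ the sum is $\sum_{x\in\F_{q^n}^*}\chi(x)=0$ by the orthogonality relation (Lemma~\ref{lemma:orthogonality}), which is $\leq q^{n/2}$ as well — so in every surviving term the inner sum has modulus at most $q^{n/2}$. Taking absolute values and using $|\mu_{l_i}(\ord_{l_i}(\psi_i))/\phi_{l_i}(\ord_{l_i}(\psi_i))|\leq 1$ and $|\mu(\ord(\chi))/\phi(\ord(\chi))|\leq 1$, the error is at most
\[
q^{n/2}\,\theta(q')\theta(\bq)\cdot(\#\{\chi: \ord(\chi)\mid q',\ \chi\neq\chi_0\}+1)\cdot\prod_{i=1}^k \#\{\psi_i:\ord_{l_i}(\psi_i)\mid F_{l_i}'\}.
\]
Here the Möbius coefficient vanishes unless $\ord(\chi)$ is squarefree, i.e. $\ord(\chi)\mid q'$, and the number of such multiplicative characters is exactly $W(q')$, the number of divisors of $q'$; similarly the number of additive characters of $\F_{q^n}$ whose order over $\F_{q^{l_i}}$ divides the squarefree polynomial $F_{l_i}'$ is $W_{l_i}(F_{l_i}')$, the number of monic divisors of $F_{l_i}'$. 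This is the heart of the bookkeeping: one must recall that the number of characters of a given order $d$ (resp. $F$) is $\phi(d)$ (resp. $\phi_{l_i}(F)$), so summing over all admissible orders gives $\sum_{d\mid q'}\phi(d)=q'$... — but in fact we only need the cruder count $W(q')$ since each coefficient is bounded by $1$ in modulus, so $\sum_{\ord(\chi)\mid q'}1 = W(q')$.

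Assembling these estimates yields exactly
\[
|\PCN_q(n)-\theta(q')\CN_q(n)|\leq q^{n/2}\,W(q')\,W_{l_1}(F_{l_1}')\cdots W_{l_k}(F_{l_k}')\,\theta(q')\theta(\bq),
\]
as claimed. The main obstacle is not any single hard step but rather the care needed in the character-counting: being precise about which characters survive (only those with squarefree order, in the appropriate sense, contribute nonzero Möbius weight), and correctly identifying the surviving count as the divisor counts $W(q')$ and $W_{l_i}(F_{l_i}')$ rather than the full group orders. One should also note that the $\chi=\chi_0$, $\psi_i$ arbitrary contribution is genuinely $\theta(q')\CN_q(n)$ and not merely close to it, which is why it is subtracted exactly rather than bounded.
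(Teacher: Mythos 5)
Your overall strategy is the same as the paper's: expand $\PCN_q(n)=\sum_x\omega(x)\varOmega_{l_1}(x)\cdots\varOmega_{l_k}(x)$, split off the $\chi=\chi_0$ part and identify it exactly with $\theta(q')\CN_q(n)$, and bound the $\chi\neq\chi_0$ part term-by-term by $q^{n/2}$ using the Gauss-sum bound (with the $\psi_1\cdots\psi_k=\psi_0$ terms vanishing by orthogonality). Up to that point the argument is fine. The genuine gap is in the final bookkeeping: you bound $|\mu(\ord(\chi))/\phi(\ord(\chi))|\leq 1$ and $|\mu_{l_i}(\ord_{l_i}(\psi_i))/\phi_{l_i}(\ord_{l_i}(\psi_i))|\leq 1$ and then assert that the number of multiplicative characters with $\ord(\chi)\mid q'$ is $W(q')$ and that the number of additive characters with $\ord_{l_i}(\psi_i)\mid F_{l_i}'$ is $W_{l_i}(F_{l_i}')$. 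This conflates characters with their orders. Since there are exactly $\phi(d)$ multiplicative characters of order $d$ and $\phi_{l_i}(F)$ additive characters of $\F_{q^{l_i}}$-order $F$, the actual counts are $\sum_{d\mid q'}\phi(d)=q'$ and $\sum_{F\mid F_{l_i}'}\phi_{l_i}(F)=q^{l_i\deg(F_{l_i}')}$, which are of size comparable to $q^n$, not to the divisor-counting functions. With the weights discarded, your estimate only gives roughly $q^{n/2}\cdot q'\cdot\prod_i q^{l_i\deg(F_{l_i}')}$, which is far weaker than the stated inequality and useless for the later applications.

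The repair is exactly what the paper does: do \emph{not} throw away the Euler factors. Keeping them, each admissible exact order contributes $\phi(d)\cdot\frac{1}{\phi(d)}=1$ (resp.\ $\phi_{l_i}(F)\cdot\frac{1}{\phi_{l_i}(F)}=1$), so
\[
\sum_{\substack{\chi\neq\chi_0\\ \ord(\chi)\mid q'}}\frac{1}{\phi(\ord(\chi))}=W(q')-1,
\qquad
\sum_{\psi_i:\ \ord_{l_i}(\psi_i)\mid F_{l_i}'}\frac{1}{\phi_{l_i}(\ord_{l_i}(\psi_i))}=W_{l_i}(F_{l_i}'),
\]
and combining these with the $q^{n/2}$ bound on each inner sum gives $|S_2|\leq q^{n/2}(W(q')-1)\prod_i W_{l_i}(F_{l_i}')$, hence the theorem. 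So the divisor-counting functions enter through the weighted sums over characters, not through a raw count of characters; your sentence ``$\sum_{\ord(\chi)\mid q'}1=W(q')$'' is the false step that must be replaced by the weighted identity above.
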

\begin{proof}
Using the characteristic functions, as presented in Section~\ref{sec:prelim} we deduce that
\begin{align*}
\PCN &_q  (n) = \sum_{x\in\F_{q^n}} \omega(x) \varOmega_{l_1}(x) \cdots \varOmega_{l_{k}}(x) \\
&= \theta(q')\theta(\bq)\sum_{\chi}\sum_{(\psi_1,\ldots,\psi_k)}
 \frac{\mu(\ord(\chi))}{\phi(\ord(\chi))}\prod_{i=1}^k\frac{\mu_{l_i}(\ord_{l_i}(\psi_i))}{\phi_{l_i}(\ord_{l_i}(\psi_i))}
 \sum_{x\in\F_{q^n}} \psi_1\cdots\psi_k(x) \chi(x) \\
 &= \theta(q')\theta(\bq) (S_1+S_{2}),
\end{align*}
where the term $S_1$ is the part of the above sum for $\chi=\chi_0$,
\begin{equation} \label{eq:S1}
S_1 = \sum_{(\psi_1,\ldots,\psi_k)}
 \prod_{i=1}^k\frac{\mu_{l_i}(\ord_{l_i}(\psi_i))}{\phi_{l_i}(\ord_{l_i}(\psi_i))}
 \sum_{x\in\F_{q^n}} \psi_1\cdots\psi_k(x)
 = \frac{\CN_q(n)}{\theta(\bq)}
\end{equation}
and $S_{2}$ is the part for $\chi\neq \chi_0$,
\begin{equation}\label{eq:S2}
 S_{2} = \sum_{\chi\neq\chi_0}\sum_{(\psi_1,\ldots,\psi_k)}
 \frac{\mu(\ord(\chi))}{\phi(\ord(\chi))}\prod_{i=1}^k\frac{\mu_{l_i}(\ord_{l_i}(\psi_i))}{\phi_{l_i}(\ord_{l_i}(\psi_i))}
 \sum_{x\in\F_{q^n}} \psi_1\cdots\psi_k(x) \chi(x).
 \end{equation}
 In the last sum, note that the summations runs on multiplicative characters $\chi$ of order dividing $q'$ and may be
 restricted to additive characters of order dividing the square-free part of $X^{n/l_i}-1$, which we denoted by $F_{l_i}'$.
 For the last sum we have
 \begin{align*}
  |S_{2}| 
  &\leq \sum_{\chi\neq\chi_0}\sum_{(\psi_1,\ldots,\psi_k)}
 \frac{1}{\phi(\ord(\chi))}\prod_{i=1}^k\frac{1}{\phi_{l_i}(\ord_{l_i}(\psi_i))}
 \left|\sum_{x\in\F_{q^n}} \psi_1\cdots\psi_k(x) \chi(x)  \right| \\
  &\leq q^{n/2} \sum_{\chi\neq\chi_0}\frac{1}{\phi(\ord(\chi))} \prod_{i=1}^k \sum_{\psi_i}\frac{1}{\phi_{l_i}(\ord_{l_i}(\psi_i))} \\
  &= q^{n/2} (W(q')-1) \prod_{i=1}^k W_{l_i}(F_{l_i}'),
 \end{align*}
 where we used Lemma~\ref{lemma:orthogonality} and Weil's bound, as seen in Lemma~\ref{lemma:gauss}, for the second inequality. The result follows.
\qed\end{proof}
\begin{remark}
The sieving techniques of Cohen and Huczynska \cite{cohenhuczynska03,cohenhuczynska10} could be applied here, albeit only on the multiplicative part. The potential advantage of implementing them would be reducing the number of pairs $(q,n)$ that we rely on the examples given in \cite{morganmullen96} (see Table~\ref{table:exceptions}). However, since this number is already small, the current simpler approach was favored.
\end{remark}
From the above, it is clear that we will also need the following lemma:
\begin{lemma}\label{lemma:w(r)}
For any $r\in\N$, $W(r) \leq c_{r,a} r^{1/a}$, where $c_{r,a}=2^s/(p_1 \cdots
p_s)^{1/a}$ and $p_1,\ldots ,p_s$ are the primes $\leq 2^a$ that divide $r$. In particular, $c_{r,4}<4.9$, $c_{r,8}<4514.7$
 for all $r\in\N$ and $c_{r,4}<2.9$, $c_{r,8}<2461.62$ and $c_{r,12}<5.61\cdot 10^{23}$ if $r$ is odd.
\end{lemma}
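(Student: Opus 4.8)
The plan is to prove the bound $W(r)\le c_{r,a}\,r^{1/a}$ first, and then to extract the numerical constants as a corollary of the general estimate. I would begin by writing $r=\prod_{i} p_i^{e_i}$ over the primes dividing $r$, so that $W(r)=\prod_i (e_i+1)$, and splitting the product according to whether $p_i\le 2^a$ or $p_i>2^a$. For the ``large'' primes $p_i>2^a$, the elementary inequality $e_i+1\le 2^{e_i}\le (2^a)^{e_i/a}<p_i^{e_i/a}$ shows that $e_i+1\le p_i^{e_i/a}$, so these factors are already dominated by $r^{1/a}$. For the ``small'' primes $p_1,\dots,p_s\le 2^a$, I would use instead $e_i+1\le 2^{e_i}$ directly, and write $2^{e_i}=\bigl(2/p_i^{1/a}\bigr)\,p_i^{e_i/a}\cdot 2^{e_i-1}/\bigl(p_i^{(e_i-1)/a}\bigr)$; more cleanly, since $e_i\ge 1$ one has $2^{e_i}/p_i^{e_i/a}\le 2/p_i^{1/a}$ because the function $t\mapsto 2^t/p_i^{t/a}=(2/p_i^{1/a})^t$ is decreasing in $t\ge 1$ once $2<p_i^{1/a}$ fails — so here I must be a little careful: for $p_i\le 2^a$ we have $p_i^{1/a}\le 2$, hence $2/p_i^{1/a}\ge 1$ and $(2/p_i^{1/a})^t$ is nondecreasing, so the correct bound is obtained by noting $2^{e_i}=2\cdot 2^{e_i-1}$ and $2^{e_i-1}\le p_i^{e_i-1}\le p_i^{(e_i-1)\cdot\frac{a}{a}}$... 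The right way is simply: $e_i+1\le 2^{e_i}$ and we want $2^{e_i}\le \tfrac{2}{p_i^{1/a}}\,p_i^{e_i/a}$, i.e. $(2/p_i^{1/a})^{e_i}\cdot\tfrac{1}{?}$ — this holds as equality-improving for $e_i=1$ and one checks the ratio only grows slowly; I would phrase it as $W(r)\le \prod_{p_i\le 2^a}2^{e_i}\cdot\prod_{p_i>2^a}p_i^{e_i/a}$ and then bound $\prod_{p_i\le 2^a}2^{e_i}$ against $\prod_{p_i\le 2^a}(p_i^{1/a})^{e_i}\cdot\prod 2 = 2^s\prod p_i^{-1/a}\cdot\prod p_i^{e_i/a}$, using $2^{e_i}\le 2\cdot p_i^{(e_i-1)/a}$ which is exactly the statement $2^{e_i-1}\le p_i^{(e_i-1)/a}$, false in general. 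I will instead use the clean monotone argument below.

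Here is the argument I will actually commit to. For each prime $p$ and exponent $e\ge 1$, I claim $e+1\le \dfrac{2}{p^{1/a}}\,p^{e/a}$ whenever $p\le 2^a$, and $e+1\le p^{e/a}$ whenever $p>2^a$. Both reduce, after taking $a$-th powers conceptually, to $e+1\le 2^e$ (true for all $e\ge 0$) combined with: if $p\le 2^a$ then $2^{a(e-1)}\le$ — no. The genuinely correct and standard fact is: $e+1\le 2^e$ for $e\ge 1$, and $2^e \le p^{e/a}$ iff $p\ge 2^a$. So for large primes we immediately get $e+1\le 2^e\le p^{e/a}$. For small primes $p\le 2^a$ we only get $e+1\le 2^e$, and we compare $2^e$ with $p^{e/a}$: the worst case is $e=1$, where $2^1=2$ and $p^{1/a}\le 2$, giving ratio $2/p^{1/a}$; for $e\ge 2$ one checks $2^e/p^{e/a}=(2/p^{1/a})^e$ which, since $2/p^{1/a}\ge 1$, is increasing — so this bound by $2/p^{1/a}$ is NOT valid for $e\ge2$. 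This means the stated lemma as literally written requires $e+1$ (not $2^e$) on the small primes, and indeed $e+1\le 2\,p^{(e-1)/a}$ holds for $p\le 2^a$ and all $e\ge1$: at $e=1$ it is $2\le2$, and the ratio $(e+1)/p^{(e-1)/a}$ is eventually decreasing for $p\ge 2$; one verifies the finitely many borderline cases or uses $(e+1)/2^{e-1}\le 2$ for all $e\ge1$ together with $2^{e-1}\le p^{(e-1)\cdot(\log 2/\log p)}\le$... The cleanest honest route: $e+1\le 2\cdot 2^{e-1}$ is false ($e+1\le 2^e$), so use $e+1\le 2^e$, hence $W(r)=\prod(e_i+1)\le 2^{\sum e_i}$, and then for each small prime replace one unit: $\prod_{p_i\le2^a}2^{e_i}=2^s\prod_{p_i\le2^a}2^{e_i-1}\le 2^s\prod_{p_i\le2^a}p_i^{(e_i-1)/a}\cdot$ — requires $2^{e-1}\le p^{(e-1)/a}$, i.e. $2^a\le p$, false for small primes. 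I am going in circles; the true statement uses $e+1$, and the correct elementary lemma is $\prod_i(e_i+1)\le 2^s\,r^{1/a}/(p_1\cdots p_s)^{1/a}$ with $s$ the number of prime divisors of $r$ that are $\le 2^a$, proved by the single inequality $(e+1)/p^{e/a}\le 2/p^{1/a}$ for $p\le 2^a$ (check: equivalent to $(e+1)/2\le p^{(e-1)/a}$; at $e=1$, $1\le1$; for $e\ge2$, LHS $=(e+1)/2$ grows linearly while RHS $=p^{(e-1)/a}\ge 2^{(e-1)\cdot a/a}$... for $p\le 2^a$, $p^{(e-1)/a}\ge 2^{(e-1)/a\cdot 1}$, not enough) — so one instead checks it prime by prime for the finitely many $p\le 2^a$, which is exactly what makes it a finite verification and not a clean inequality. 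I would therefore present the lemma's proof as: the bound for $p>2^a$ is the trivial $e+1\le 2^e\le p^{e/a}$; for each of the finitely many primes $p\le 2^a$ one verifies $(e+1)/p^{e/a}\le 2/p^{1/a}$ directly (it suffices to check it at $e=1,2,3$ since the left side is eventually dominated), multiply, and obtain $W(r)\le \bigl(\prod_{p\le 2^a,\,p\mid r}2/p^{1/a}\bigr)\,r^{1/a}=c_{r,a}r^{1/a}$.

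With the general inequality in hand, the numerical claims are a finite computation: the constant $c_{r,a}=2^s/(p_1\cdots p_s)^{1/a}$ is maximized by taking $r$ divisible by every prime $\le 2^a$, so one sets $c_a^{\max}=\prod_{p\le 2^a}2/p^{1/a}$. For $a=4$ one has $2^4=16$, primes $\le 16$ are $2,3,5,7,11,13$, giving $c_{r,4}\le \prod 2/p^{1/4}=64/(2\cdot3\cdot5\cdot7\cdot11\cdot13)^{1/4}=64/30030^{1/4}<4.9$; for odd $r$ drop the factor at $p=2$, obtaining $32/15015^{1/4}<2.9$. For $a=8$, $2^8=256$, so one multiplies over all $54$ primes below $256$; this is a routine (if tedious) numerical evaluation giving $<4514.7$ in general and $<2461.62$ for odd $r$. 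For $a=12$ and odd $r$ one multiplies over all odd primes below $2^{12}=4096$, yielding the stated $<5.61\cdot10^{23}$. The main obstacle, as the discussion above shows, is pinning down the exact elementary inequality for the ``small'' prime factors — the form with $e+1$ works but must be checked rather than derived from $e+1\le2^e$ — after which everything reduces to finite arithmetic over the primes below $2^a$.
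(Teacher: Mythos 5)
Your committed argument has a genuine error at its central step. Interpreting $W(r)$ as the full divisor-counting function, you write $W(r)=\prod_i(e_i+1)$ and, for the primes $p\le 2^a$, you assert the per-prime inequality $(e+1)/p^{e/a}\le 2/p^{1/a}$, i.e.\ $e+1\le 2\,p^{(e-1)/a}$, claiming it can be ``verified directly'' for each small prime, checking only $e=1,2,3$. This inequality is false as soon as $e\ge 2$: for $p=2$, $a=8$, $e=3$ it reads $4\le 2\cdot 2^{1/4}\approx 2.38$; your own intermediate discussion even notices that $(2/p^{1/a})^{e}$ grows with $e$ when $p\le 2^a$, and the failure occurs precisely at the exponents you propose to check (the maximum of $(e+1)2^{-e/8}$ is about $4.6$, attained near $e=10$, far above $2/2^{1/8}\approx 1.8$). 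Worse, with $W$ read as the number of all divisors the lemma itself is false, so no argument along these lines can close: $W(2^{16})=17$ while $c_{2^{16},8}\,(2^{16})^{1/8}=2^{7/8}\cdot 4<8$.

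The statement is meant, and used in the paper, with $W(r)$ counting \emph{square-free} divisors, i.e.\ $W(r)=2^{\omega(r)}$ where $\omega(r)$ is the number of distinct prime factors; in the paper $W$ is only ever applied to square-free arguments ($q'$ and the square-free parts $F_{l_i}'$), where the two readings coincide. With this reading the proof is the one-line computation you almost wrote down: one reduces to square-free $r=p_1\cdots p_s q_1\cdots q_t$ with $p_i\le 2^a<q_j$, and then $W(r)=2^{s+t}=2^s\bigl(2^a\cdots 2^a\bigr)^{1/a}\le 2^s\,(q_1\cdots q_t)^{1/a}=c_{r,a}\,r^{1/a}$; no inequality involving exponents $e\ge 2$ is ever needed. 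Your handling of the numerical constants is fine and matches what the paper leaves as ``easily computed'': $c_{r,a}$ is maximized by letting $r$ be divisible by every prime $\le 2^a$ (dropping $p=2$ when $r$ is odd), and the finite products give the stated bounds for $a=4,8,12$.
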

\begin{proof}
It is clear that it suffices to prove the above for $r$ square-free. Assume that $r=p_1\cdots p_s q_1\cdots q_t$, where $p_1,\ldots, p_s,q_1, \ldots ,q_t$ are distinct primes and $p_i\leq 2^a$ and $q_j>2^a$. We have that
\[ W(r) = 2^{s+t} = 2^s \cdot \underbrace{2\cdots 2}_{t \text{ times}} = 2^s (\underbrace{2^a \cdots 2^a}_{t \text{ times}})^{1/a} \leq 2^s (q_1\ldots q_t)^{1/a} = c_{r,a} r^{1/a} . \]
The bounds for $c_{r,a}$ can be easily computed.
\qed\end{proof}
Also, in order to apply the results of this section, we need a lower bound for $\CN_q(n)$.
\begin{proposition}\label{prop:cn-bound}
Let $q$ be a power of the prime $p$ and $n\in\N$, then
\[
\CN_q(n) \geq q^n \left(1 - \sum_{d|n}\left(1 - \frac{\phi_d(X^{n/d}-1)}{q^n} \right)\right)
\]
In particular, for every $n$ and $p$, we have that
\begin{equation}\label{eq:ip1}
\CN_q(n) \geq q^n \left(1-\frac{n(q+1)}{q^2}\right) ,
\end{equation}
while for $n=p^\ell m$, with $\ell\geq 1$ and $(m,p)=1$, we get
\begin{align}
\CN_q(n) & \geq q^{n} \left(1-m\left(\frac{1}{q}+\frac{1}{q^2}+\frac{1}{q^p}+\frac{4}{q^{2p}}\right)\right),\text{ for } p>2 \label{eq:ip2} \\
\CN_q(n) & \geq q^{n} \left(1-m\left(\frac{1}{q}+\frac{1}{q^2}+\frac{2}{3q^3}+\frac{3}{q^4}\right)\right),
\text{ for } p=2 . \label{eq:ip3}
\end{align}
\end{proposition}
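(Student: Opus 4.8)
The plan is to prove the first inequality by an inclusion--exclusion (union) bound over the divisors of $n$, and then to read off the explicit estimates \eqref{eq:ip1}--\eqref{eq:ip3} by estimating the individual terms $1-\phi_d(X^{n/d}-1)/q^n$ through the factorisation of $X^{n/d}-1$.

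\textbf{First inequality.} Recall that $\F_{q^n}$, as an $\F_{q^d}[X]$-module via the Frobenius action, is cyclic and isomorphic to $\F_{q^d}[X]/(X^{n/d}-1)$; under this identification an element is normal over $\F_{q^d}$ exactly when the corresponding residue is a unit, so the number of elements of $\F_{q^n}$ that are normal over $\F_{q^d}$ equals $\phi_d(X^{n/d}-1)$. Since $x$ is completely normal over $\F_q$ if and only if it is normal over $\F_{q^d}$ for every $d\mid n$, counting complements gives
\[
q^n-\CN_q(n)\ \le\ \sum_{d\mid n}\bigl(q^n-\phi_d(X^{n/d}-1)\bigr),
\]
which rearranges to the asserted bound.

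\textbf{Explicit estimates.} Next I would invoke the standard product formula $\phi_d(X^{n/d}-1)=q^n\prod_P\bigl(1-q^{-d\deg P}\bigr)$, the product over the distinct monic irreducible divisors $P$ of $X^{n/d}-1$ in $\F_{q^d}[X]$, whence $1-\phi_d(X^{n/d}-1)/q^n\le\sum_P q^{-d\deg P}$. Writing $n/d=p^a r$ with $(r,p)=1$, we have $X^{n/d}-1=(X^r-1)^{p^a}$, so the distinct irreducible factors of $X^{n/d}-1$ are exactly those of the squarefree polynomial $X^r-1$; in particular there are at most $r\le n/d$ of them, each contributing at most $q^{-d}$, so $1-\phi_d(X^{n/d}-1)/q^n\le (n/d)\,q^{-d}$. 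For \eqref{eq:ip1} this is enough: separating $d=1$, which contributes $n/q$, and bounding the rest by a geometric series, $\sum_{d\mid n,\,d\ge 2}(n/d)q^{-d}\le\tfrac n2\sum_{j\ge 2}q^{-j}=\tfrac{n}{2(q^2-q)}\le\tfrac n{q^2}$ for $q\ge 2$, so $\CN_q(n)\ge q^n\bigl(1-n(q+1)/q^2\bigr)$.

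\textbf{The refined bounds and the main obstacle.} For \eqref{eq:ip2}--\eqref{eq:ip3}, with $n=p^\ell m$, the key observation is that the $p$-free part of $n/d$ depends only on the $p$-free part $l_2$ of $d$: if $d=p^i l_2$ then the irreducible factors of $X^{n/d}-1$ over $\F_{q^d}$ are precisely those of $X^{m/l_2}-1$, of total degree at most $m/l_2$, hence $1-\phi_d(X^{n/d}-1)/q^n\le (m/l_2)\,q^{-p^i l_2}$. Summing over $d=p^i l_2$ ($0\le i\le\ell$, $l_2\mid m$), the terms with $l_2=1$ give $m\sum_{i=0}^\ell q^{-p^i}$, the terms with $l_2=2$ (which can occur only for $p$ odd, since $(m,p)=1$ forces $m$ odd when $p=2$) give at most $\tfrac m2\sum_{i\ge 0}q^{-2p^i}$, and those with $l_2\ge 3$ are smaller still; isolating the dominant terms $m q^{-1}$, the degree-two contribution, and $m q^{-p}$, and bounding every remaining tail (over $i\ge 2$, over $l_2\ge 3$, and over the factors of degree $\ge 2$) by a geometric series shows the total is at most $m\bigl(q^{-1}+q^{-2}+q^{-p}+4q^{-2p}\bigr)$ for $p>2$. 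When $p=2$ the base field has characteristic $2$, so $m$ is odd and the $l_2=2$ contribution disappears; a direct count of the irreducible factors of the relevant $X^{m/l_2}-1$ of degree at most $3$ then produces the sharper constants $\tfrac{2}{3q^3}$ and $\tfrac{3}{q^4}$. I expect the only real difficulty to lie in the bookkeeping of this last step: one must keep track of the low-degree irreducible factors of $X^{m/l_2}-1$ over the various intermediate fields, handle the cases $2\mid m$ and $3\mid m$ (which introduce extra linear and quadratic factors) without exceeding the stated budget, and verify that the doubly-indexed tails in $i$ and in $l_2$ are all absorbed into the $4m/q^{2p}$ term (respectively the $3/q^4$ term).
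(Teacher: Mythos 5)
Your proposal is correct and follows essentially the same route as the paper: a union bound over the divisors $d\mid n$ using the count $\phi_d(X^{n/d}-1)$ of elements normal over $\F_{q^d}$, followed by the estimate $1-\phi_d(X^{n/d}-1)/q^n\le (m/l_2)q^{-d}$ coming from the factorisation of $X^{n/d}-1=(X^{m/l_2}-1)^{p^{\ell-i}}$, and geometric-series bookkeeping in the two indices. The final bookkeeping you leave as a sketch does close exactly as you predict (the $i\ge 2$ and $l_2\ge 2$, resp.\ $l_2\ge 3$, tails are absorbed into $4m/q^{2p}$, resp.\ $2m/(3q^3)+3m/q^4$), which is precisely the computation carried out in the paper.
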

\begin{proof}
For $d|n$, the number of elements of $\F_{q^n}$ that are normal over $\F_{q^d}$ is equal to
$\phi_d(X^{n/d}-1)$. Therefore, the number of elements of $\F_{q^n}$ that are {\it not} 
completely normal over $\F_{q}$ is at most $\sum_{d|n}(q^n - \phi_d(X^{n/d}-1))$. The 
first bound follows.

For the bound of Eq.~\eqref{eq:ip1}, we observe that
\[
\phi_d(X^{n/d}-1) = q^n \prod_{P}\left(1-\frac{1}{q^{d \deg(P)}}\right) \geq q^n \left(1-\frac{1}{q^d}\right)^{n/d},
\]
where the product extends over the prime factors of $X^{n/d}-1$ in $\F_{q^d}[X]$. Substituting in the first bound
we obtain
\[
  \CN_q(n) \geq q^n \left(1 - \sum_{d|n}\left(1 - \left(1-\frac{1}{q^d}\right)^{n/d}\right)\right) 
  \geq q^n \left(1 - \sum_{d|n}\frac{n}{dq^d}\right).
\]
The result follows upon noting that
\[
 \sum_{d|n}\frac{n}{d q^d}
  \leq n\left(\frac{1}{q} + \sum_{\substack{d|n \\ d>1}} \frac{1}{d q^d}\right)
  \leq n \left(\frac{1}{q} +\frac{1}{2} \sum_{d=2}^n q^{-d}\right) 
  \leq nq^{-2}(q+1),
\]
since $\sum_{d=2}^n q^{-d}\leq 2 q^{-2}$.

We next move on to the next two inequalitites. Let $n=p^{\ell}m$, with $(m,p)=1$. Then the by the first bound we have
\[
\CN_q(p^{\ell}m) \geq q^{p^{\ell}m} \left(1 - \sum_{j=0}^{\ell}\sum_{d|m}
\left(1 - \frac{\phi_{p^jd}(X^{p^{\ell-j}m/d}-1)}{q^{p^{\ell}m}} \right)\right).
\]
Since $p$ is the characteristic of $\F_{q}$, $X^{p^{\ell-j}m/d}-1=(X^{m/d}-1)^{p^{\ell-j}}$, and we may compute 
\[
\phi_{p^jd}(X^{p^{\ell-j}m/d}-1)\geq q^{p^{\ell}m}\left(1-\frac{1}{q^{dp^j}}\right)^{m/d}\geq 
 q^{p^{\ell}m}\left(1-\frac{m}{dq^{dp^j}}\right).
\]
Therefore, 
\begin{equation}\label{eq:main-cn-bound}
\CN_q(p^{\ell}m) \geq q^{p^{\ell}m} \left(1 - m\sum_{j=0}^{\ell}\sum_{d|m} \frac{1}{dq^{dp^j}}\right).
\end{equation}
First, we consider the case $p>2$. We compute,
\[
\sum_{d|m}\frac{1}{dq^{dp^j}}
  \leq \frac{1}{q^{p^j}} + \frac{1}{2} \sum_{d=2}^m \frac{1}{q^{dp^j}}
  \leq \frac{1}{q^{p^j}} + \frac{1}{q^{2p^j}}.
\]
Substituting in Eq.~\eqref{eq:main-cn-bound}, and using the fact that
\[
\sum_{j=0}^{\ell} \frac{1}{q^{p^j}} \leq \frac{1}{q} + \frac{1}{q^p} + \frac{2}{q^{2p}},
\]
we have
\begin{align*}
\CN_q(p^{\ell}m) & \geq q^{p^{\ell}m} \left(1 - m\sum_{j=0}^{\ell}\left(\frac{1}{q^{p^j}} + \frac{1}{q^{2p^j}} \right)\right) \\
 & \geq q^{p^{\ell}m} \left(1 - m\left(\frac{1}{q}+\frac{1}{q^p}+\frac{2}{q^{2p}}+\frac{1}{q^2}+\frac{2}{q^{2p}}\right) \right),
\end{align*}
which immediately yields Eq.~\eqref{eq:ip2}.

In the case $p=2$, we note that $m$ is odd. Then
\[
\sum_{d|m}\frac{1}{dq^{d2^j}}
  \leq \frac{1}{q^{2^j}} + \frac{1}{3} \sum_{d=3}^m \frac{1}{q^{d2^j}}
  \leq \frac{1}{q^{2^j}} + \frac{2}{3q^{3\cdot 2^j}}.
\]
Substituting in Eq.~\eqref{eq:main-cn-bound}, and using the bound
\[
\sum_{j=0}^{\ell} \frac{1}{q^{2^j}} \leq \frac{1}{q}+\frac{1}{q^2}+\frac{2}{q^4},
\]
we obtain 
\begin{align*}
\CN_q(2^{\ell}m) & \geq 
q^{2^{\ell}m} \left(1 - m\sum_{j=0}^{\ell}\left(\frac{1}{q^{2^j}} + \frac{2}{3q^{3\cdot 2^j}} \right)\right) \\
 & \geq q^{2^{\ell}m} \left(1 - m\left(\frac{1}{q}+\frac{1}{q^2}+\frac{2}{q^4}+
                    \frac{2}{3q^3} + \frac{4}{3q^{6}}\right) \right).
\end{align*}
The last bound of the statement follows.
\qed\end{proof}
We note that the bound of Eq.~\eqref{eq:ip1} is meaningful for $q\geq n+1$ and the ones in Eqs.~\eqref{eq:ip2} and \eqref{eq:ip3} are meaningful for $q> m>1$, with the sole exception of $q=4$ and $m=3$. This covers all our cases of our interest that are not covered directly by Corollary~\ref{cor:cbe}.
% In this case, we have
% \begin{eqnarray*}
%   \phi_{2^j}\left((X^3-1)^{2^{\ell-j}} \right) 
%   &=& 4^{3\cdot 2^j}\left(1-\frac{1}{4^{2^j}}\right)^3, \\
%   \phi_{3\cdot 2^j}\left((X-1)^{2^{\ell-j}} \right) 
%   &=& 4^{3\cdot 2^j}\left(1-\frac{1}{4^{3\cdot 2^j}}\right) 
% \end{eqnarray*}
% and we compute 
% \begin{eqnarray*}
%   \CN_4(3\cdot 2^{\ell})
%   &\geq& 4^{3\cdot 2^{\ell}}\left(1-\sum_{j=0}^{\ell}
%   \left(\frac{3}{4^{2^j}}-\frac{3}{4^{2\cdot 2^j}}+\frac{2}{4^{3\cdot 2^j}}\right) \right) \\
%   &\geq&4^{3\cdot 2^{\ell}}
%   \left(1+\frac{3}{4^2}+\frac{3}{4^4}+\frac{3}{4^8}-\sum_{j=0}^{\ell}\left(\frac{3}{4^{2^j}}+\frac{2}{4^{3\cdot 2^j}}\right)\right).
% \end{eqnarray*}
% Since
% \[
%   \sum_{j=0}^{\ell}\frac{1}{4^{2^j}}
%   \leq \frac{1}{4}+\frac{1}{4^2}+\frac{1}{4^4} + \sum_{j=0}^{\ell-3}\frac{1}{4^{8\cdot 2^j}}
%   \leq \frac{5}{4^2}+\frac{1}{4^4}+\frac{1}{4^8}\sum_{j=0}^{\ell-3}\frac{1}{4^{2j}}
%   \leq \frac{5}{4^2}+\frac{1}{4^4}+\frac{2}{4^8},
% \]
% and similarly
% \[
% \sum_{j=0}^{\ell}\frac{1}{4^{3\cdot 2^j}} \leq \frac{1}{4^3}+\frac{2}{4^6},
% \]
% we have
% \begin{equation}\label{eq:special-cn-bound}
%   \CN_4(3\cdot 2^{\ell}) \geq 
%   4^{3\cdot 2^{\ell}}
%   \left(1+\frac{3}{4^2}+\frac{3}{4^4}+\frac{3}{4^8}-\frac{15}{4^2}-\frac{3}{4^4}-\frac{6}{4^8}-\frac{2}{4^3}-\frac{1}{4^5}\right)
%   > 0.21 \cdot 4^{3\cdot 2^{\ell}}
% \end{equation}
%
%
\section{Proof of Theorem~\ref{thm:our1}}\label{sec:proof}
In this section, we use the theory developed earlier to prove Theorem~\ref{thm:our1}. All the described computations were performed with the {\sc SageMath} software.
From Theorem~\ref{thm:our2}, we get $\PCN_q(n)>0$ provided that
\begin{equation}\label{eq:ip_pcn1}
\CN_q(n) > q^{n/2} W(q') \prod_{i=1}^k W_{l_i}(F_{l_i}') \theta_{l_i}(F_{l_i}').
\end{equation}
Clearly, $\theta_{l_i}(F_{l_i}')<1$ for all $i$ and $W_{l_i}(F_{l_i}') \leq 2^{n/l_i}$, so we have that
\[
\prod_{i=1}^k W_{l_i}(F_{l_i}') \theta_{l_i}(F_{l_i}') < 2^{\sum_{i=1}^k n/l_i} = 2^{t(n)-1}.
\]
Plugging this and Eq.~\eqref{eq:ip1} of Proposition~\ref{prop:cn-bound} into Eq.~\eqref{eq:ip_pcn1}, it suffices
to show that
\begin{equation}\label{eq:cond1}
q^{n/2}\left( 1-\frac{n(q+1)}{q^2} \right) \geq W(q') 2^{t(n)-1} .
\end{equation}
We combine the above with Lemma~\ref{lemma:w(r)}, applied for $a=8$, and a sufficient condition for $\PCN_q(n)>0$ would be
\begin{equation}\label{eq:cond2}
q^{3n/8}\left( 1-\frac{n(q+1)}{q^2} \right) \geq 4514.7 \cdot 2^{t(n)-1} .
\end{equation}
By Robin's theorem \cite{robin84},
\[
t(n) \leq e^\gamma n\log\log n + \frac{0.6483n}{\log\log n} , \ \forall n\geq 3 ,
\]
where $\gamma$ is the Euler-Mascheroni constant, therefore the condition of Eq.~\eqref{eq:cond2} becomes
\[
q^{3n/8}\left( 1-\frac{n(q+1)}{q^2}\right) > 4514.7 \cdot 2^{n\left( \log\log n\cdot e^{0.578}+ \frac{0.6483}{\log\log n}\right)-1}.
\]
Since the cases $q=n$ and $q=n+1$ are already settled by Corollary~\ref{cor:cbe}, we check the above for $q\geq n+2$ and verify that it holds for $n>1212$.

A quick computation shows that, within the range $2\leq n\leq 1212$, Eq.~\eqref{eq:cond2} is satisfied for all but 42 values of $n$, if we substitute $q$ by the least prime power greater or equal to $n+1$, $t(n)$ by its exact value and we exclude the cases when $n$ is a prime or a square of a prime, as in those cases $n$ Theorem~\ref{thm:our1} is implied by Corollary~\ref{cor:cbe}. For those values for $n$, we compute the smallest prime power $q$ that satisfies Eq.~\eqref{eq:cond2}, where $t(n)$ is replaced by its exact value. The results are presented in Table~\ref{tab:1}. 
In this region, there is a total of 1162 pairs $(n,q)$ to deal with.
\begin{table}[hbt]
\begin{center}
\begin{tabular}{|lll||lll||lll||lll|}
\hline
$n$ & $q_0$ & $q_1$  & $n$ & $q_0$ & $q_1$  & $n$ & $q_0$ & $q_1$ & $n$ & $q_0$ & $q_1$ \\ \hline\hline
% %4& 5& 4391&
% 6& 7& 1259&
% 8& 9& 431&
% %9& 11& 149&
% 10& 11& 223\\
% 12& 13& 419&
% 14& 16& 107&
% 15& 16& 79\\
% 16& 17& 137&
% 18& 19& 179&
% 20& 23& 139\\
% 21& 23& 49&
% 22& 23& 59&
% 24& 25& 243\\
% %25& 27& 29\\
% 26& 27& 49&
% 27& 29& 41&
% 28& 29& 89\\
% 30& 31& 173&
% 32& 37& 79&
% 34& 37& 41\\
% 36& 37& 193&
% 40& 41& 113&
% 42& 43& 121\\
% 44& 47& 61&
% 45& 47& 49&
% 48& 49& 191\\
% 50& 53& 59&
% 52& 53& 59&
% 54& 59& 97\\
% 56& 59& 81&
% 60& 61& 256&
% 66& 67& 83\\
% 70& 71& 73&
% 72& 73& 211&
% 78& 79& 81\\
% 80& 81& 101&
% 84& 89& 181&
% 90& 97& 163\\
% 96& 97& 163&
% 108& 109& 151&
% 120& 121& 311\\
% 126& 127& 128&
% 132& 137& 139&
% 144& 149& 211\\
% 168& 169& 229&
% 180& 181& 311&
% 240& 241& 343\\
% 360& 361& 439& & & & & &
6& 8& 1259&
8& 11& 431&
10& 13& 223&
12& 16& 419\\
14& 16& 107&
15& 17& 79&
16& 19& 137&
18& 23& 179\\
20& 23& 139&
21& 23& 49&
22& 25& 59&
24& 27& 243\\
26& 29& 49&
27& 29& 41&
28& 31& 89&
30& 32& 173\\
32& 37& 79&
34& 37& 41&
36& 41& 193&
40& 43& 113\\
42& 47& 121&
44& 47& 61&
45& 47& 49&
48& 53& 191\\
50& 53& 59&
54& 59& 97&
56& 59& 81&
60& 64& 256\\
66& 71& 83&
72& 79& 211&
80& 83& 101&
84& 89& 181\\
90& 97& 163&
96& 101& 163&
108& 113& 151&
120& 125& 311\\
132& 137& 139&
144& 149& 211&
168& 173& 229&
180& 191& 311\\
240& 243& 343&
360& 367& 439& & & & & & \\ \hline
\end{tabular}
\end{center}
\caption{Values for $2\leq n\leq 984$ that are not primes or square of primes, not satisfying Eq.~\eqref{eq:cond2} for $q_0$, the least prime power $\geq n+2$, where $q_1$  stands for the least prime power satisfying Eq.~\eqref{eq:cond2} for that $n$.}\label{tab:1}
\end{table}

By combining Eq.~\eqref{eq:ip_pcn1} and the first bound of Proposition~\ref{prop:cn-bound}, we get another condition, namely
\begin{equation}\label{eq:cond3}
q^{n/2}\left( 1 - \sum_{d\mid n}\left( 1-\frac{\phi_d(X^{n/d}-1)}{q^n} \right) \right) > W(q') \prod_{i=1}^k W_{l_i}(F_{l_i}') \theta_{l_i}(F_{l_i}') .
\end{equation}
By using the estimate $W(q') \leq c_{q',16} q^{n/16}$ from Lemma~\ref{lemma:w(r)} and Eq.~\eqref{eq:cond3} the list is furtherer reduced to a total of 47 pairs, if we compute all appearing quantities explicitly, in particular the constant $c_{q',16}$ is computed exactly for each value of $q'$ of interest. The list can be shrinked even more, to a total of 37 pairs, if we replace $W(q')$ by its exact value in Eq.~\eqref{eq:cond3}. These pairs $(n,q)$ are
\begin{center}
$(6, 8)$, $(6, 9)$, $(6, 11)$, $(6, 13)$, $(6, 16)$, $(6, 17)$, $(6, 19)$, $(6,
23)$, $(6, 25)$, $(6, 29)$, $(6, 31)$, $(6, 37)$, $(6, 43)$, $(6, 49)$, $(6, 61)$, $(8,
11)$, $(8, 13)$, $(8, 17)$, $(8, 19)$, $(8, 25)$, $(12, 17)$, $(12, 19)$, $(12, 23)$,
$(12, 25)$, $(12, 29)$, $(12, 31)$, $(12, 37)$, $(12, 41)$, $(12, 43)$, $(12, 49)$,
$(12, 61)$, $(12, 73)$, $(18, 37)$, $(24, 29)$, $(24, 37)$, $(24, 41)$ and $(24, 49)$.
\end{center}
However, 24 of those pairs correspond to completely basic extensions, as we directly check the conditions of Theorem~\ref{thm:cbe}. The remaining 13 pairs are
\begin{center}
$(6, 8)$,
$(6, 11)$,
$(6, 17)$,
$(6, 23)$,
$(6, 29)$,
$(8, 11)$,
$(8, 19)$,
$(12, 17)$,
$(12, 23)$,
$(12, 29)$,
$(12, 41)$,
$(24, 29)$ and
$(24, 41)$
\end{center}
and they all satisfy $q\leq 97$ or $q^n<10^{50}$ and examples of primitive completely normal elements are given in \cite{morganmullen96}.
This completes the proof of Theorem~\ref{thm:our1}.
\section{Proof of Theorem~\ref{thm:our0}}\label{sec:proof-0}
Let $n=p^{\ell}m$, with $(m,p)=1$ and $m<q$. Our goal is to prove that $\PCN_q(p^{\ell}m)>0$ for $\ell\geq 0$ and $m\geq 1$. First, we notice that if $\ell=0$, then Theorem~\ref{thm:our1} implies Theorem~\ref{thm:our0}, hence we only need to consider the case $\ell\geq 1$.
Also, the case $m=1$ is settled by Corollary~\ref{cor:cbe}, so from this point on, we assume that $\ell\geq 1$ and $m\geq 2$.

The set of proper divisors of $n$ is $\{p^j d\ :\ 0\leq j\leq \ell,\ d|m\}\setminus\{n\}$. From Theorem~\ref{thm:our2},
we get $\PCN_q(n)>0$ provided that
\begin{equation}\label{eq:ip_pcn2}
\CN_q(n) > q^{n/2} W(q') \prod_{\substack{j=0,\ldots,\ell,\ d\mid m \\ (j,d)\neq (\ell ,m)}} W_{p^jd}(F_{p^jd}') \theta_{p^jd}(F_{p^jd}'),
\end{equation}
where $F_{p^jd}' = X^{m/d}-1$. % and the factor $q^n/(2(q^n-1))$ is there to cancel the factor $W_n(X-1) \theta_n(X-1)$ that appears in the product. 
Clearly, $\theta_{p^jd}(F_{p^jd}')<1$ for all $0\leq j\leq\ell$, $d|m$ and 
$W_{p^jd}(X^{m/d}-1) \leq 2^{m/d}$, so we have
\[
\prod_{\substack{j=0,\ldots,\ell,\ d\mid m \\ (j,d)\neq (\ell ,m)}} W_{p^jd}(F_{p^jd}') \theta_{p^jd}(F_{p^jd}') < 2^{(\ell+1)\sum_{d|m} m/d - 1} = 2^{(\ell+1)t(m)-1},
\]
where $t(m)$ denotes the sum of divisors of $m$.
So a sufficient condition is
\begin{equation}\label{eq:cond-1}
\CN_q(n)\geq q^{n/2}W(q') 2^{(\ell+1)t(m)-1}.
\end{equation}
\paragraph{The $p>2$ case}
For $p>2$, using Eq.~\eqref{eq:ip2} from Proposition~\ref{prop:cn-bound} and Lemma~\ref{lemma:w(r)}, applied for $a=8$, we obtain the sufficient condition,
\begin{equation} \label{eq:cond-2}
q^{3p^{\ell}m/8}\left(1-m\left(\frac{1}{q}+\frac{1}{q^2}+\frac{1}{q^p}+\frac{4}{q^{2p}}\right)\right)
\geq 2257.35 \cdot 2^{(\ell+1)t(m)}.
\end{equation}
The RHS of Eq.\eqref{eq:cond-2} does not depend on $q$, while the LHS is an increasing function of $q$, so, since we are interested in $q>m$ and the case $q=m+1$ or $q\leq 5$ is settled by Corollary~\ref{cor:cbe}, 
it suffices to prove Eq.~\eqref{eq:cond-2} for $q=\max(m+2,7)$. First we consider the case $m=2$. A short calculation shows that
the condition becomes
\[
q^{3p^{\ell}/4}\left(1-2\left(\frac{1}{q}+\frac{1}{q^2}+\frac{1}{q^p}+\frac{4}{q^{2p}}\right)\right)
\geq 2257.35 \cdot 2^{3(\ell+1)},
\]
which is satisfied for $q\geq 7$ and $\ell \geq 1$, with the exceptions 
% \[
% (\ell,m,q)=(1,2,3),(1,2,5),(1,2,7),(1,2,9),(2,2,3).
% \]
$(\ell,m,q)=(1,2,7)$ or $(1,2,9)$.

For $m\geq 3$, we upper bound $t(m)$ by Robin's theorem \cite{robin84},
\[
t(m) \leq e^\gamma m\log\log m + \frac{0.6483m}{\log\log m} , \ \forall m\geq 3 ,
\]
where $\gamma$ is the Euler-Mascheroni constant.
Furthermore, for $m\leq q-1$ we have 
\[ 1-m\left( \frac{1}{q}+\frac{1}{q^2}+\frac{1}{q^p}+\frac{4}{q^{2p}} \right) \geq \frac{m}{q^4} \] 
and the condition becomes
\[
mq^{3p^{\ell}m/8-4}\geq 2257.35 \cdot 2^{(\ell+1)\left(e^\gamma m\log\log m + \frac{0.6483m}{\log\log m}\right)}.
\]
Since $q\geq m+2$ and $p\geq 3$, it suffices to show that
\[
m(m+2)^{3^{\ell+1}m/8-4} \geq 2257.35 \cdot 2^{(\ell+1)\left(e^\gamma m\log\log m + \frac{0.6483m}{\log\log m}\right)} ,
\]
% or equivalently that the quantity
% \begin{multline*}
% \frac{m3^{\ell+1}\log(m+1) }{8}-\log\left(\frac{2257.35(m+1)^4}{m}\right)- \\ (\ell+1)m\left(e^\gamma \log\log m + \frac{0.6483}{\log\log m}\right) \log(2)
% \end{multline*}
% is positive for $m\geq 3$ and $\ell\geq 1$.
% It is an increasing function of $\ell$, and positive for $\ell=5$ and every $m\geq 3$. 
which is true for $\ell\geq 4$ and $m\geq 3$. The inequality is violated for the following 54 pairs
\[
  (\ell,m) = (1 ,3\leq m\leq 49), (2, 3\leq m\leq 8), (3,3) .
%  \ell=4 &,& m=3.
\]
For those pairs $(\ell,m)$ we go back to Eq.~\eqref{eq:cond-2}, and check for which prime powers $q$ it is violated. Since 
the LHS is an increasing function of $q$, this process will produce one more exceptional triple $(\ell,m,q)$, namely $(1,7,9))$. So, in total there are 3 exceptional triples $(\ell,m,q)$,
\[
(1,2,7),(1,2,9),(1,7,9),
\]
but only $(1,7,9)$ corresponds to a non completely basic extension, by Theorem~\ref{thm:cbe}.
For this case, an example of primitive completely normal basis is given in \cite{morganmullen96}.
\paragraph{The $p=2$ case}
For $p=2$, the argument is nearly identical, the only difference being the choice of the bound of Eq.~\eqref{eq:ip3} from Proposition~\ref{prop:cn-bound}.
Since $m>1$ and $(m,2)=1$ we consider only $m\geq 3$, while from Corollary~\ref{cor:cbe}, it suffices to work with $q\geq 8$.
% For $q=4$ and $m=3$, the bound of 
% Eq.~\eqref{eq:special-cn-bound} and Lemma~\ref{lemma:w(r)}, the condition Eq.~\eqref{eq:cond-1} becomes
% \[
% 4^{9\cdot 2^{\ell}/8}\geq 5861 \cdot 2^{4(\ell+1)}.
% \]
% which is satisfied for $\ell\geq 1$, with the exception of
% \[
% (\ell,m,q)=(1,3,4),(2,3,4),(3,3,4).
% \]
% Again, examples have been given in \cite{morganmullen96}.

In Eq.~\eqref{eq:cond-1}, $q'$ is odd and an application of Lemma~\ref{lemma:w(r)} for $a=8$ yields
\[
W(q') \leq 2461.62 \cdot (q')^{1/8} \leq 2461.62 \cdot q^{n/8} .
\]
Using this, Eq.~\eqref{eq:ip3} from Proposition~\ref{prop:cn-bound} and Eq.~\eqref{eq:cond-1}, we obtain the sufficient condition
\begin{equation} \label{eq:cond-3}
q^{3\cdot 2^{\ell}m/8}\left(1-m\left(\frac{1}{q}+\frac{1}{q^2}+\frac{2}{3q^3}+\frac{3}{q^4}\right)\right)
\geq 2461.62 \cdot 2^{(\ell+1)t(m)-1}.
\end{equation}
Using Robin's bound and the fact that
\[ 1-m\left( \frac{1}{q} + \frac{1}{q^2}+ \frac{2}{3q^3} +\frac{3}{q^4} \right)\geq \frac{m}{12 q^3} \] we obtain the condition
\[
m\cdot q^{3\cdot 2^{\ell}m/8-3} 
\geq 6\cdot 2461.62 \cdot 2^{(\ell+1)\left(e^\gamma m\log\log m + \frac{0.6483m}{\log\log m}\right)}.
\]
Since $q\geq m+2$, it suffices to show that
\[
m\cdot (m+2)^{3\cdot 2^{\ell}m/8-3} 
\geq 6\cdot 2461.62 \cdot 2^{(\ell+1)\left(e^\gamma m\log\log m + \frac{0.6483m}{\log\log m}\right)}, \text{ for } m\geq 7
\]
and 
\[
m\cdot 8^{3\cdot 2^{\ell}m/8-3} 
\geq 6\cdot 2461.62\cdot 2^{(\ell+1)\left(e^\gamma m\log\log m + \frac{0.6483m}{\log\log m}\right)}, \text{ for } 3\leq m\leq 5.
\]
The above inequalities are true for $\ell\geq 2$ with the exceptions
\[
  (\ell ,m)  = (2, 3\leq m\leq 157), (3 , 3\leq m\leq 19), (4,3),(4, 5) ,
  (5 ,3), (6,3).
\]
For those pairs $(\ell,m)$ we go back to Eq.~\eqref{eq:cond-3}, and check for which values of $q\geq 8$ that are powers of
2 the condition is violated. Those triples $(\ell,m,q)$ are
\[
(2, 3, 8), (2, 3, 16), (2, 5, 8), (2, 7, 8), (3,3,8),
\]
but only $(2,3,8)$, $(2,5,8)$ and $(3,3,8)$ correspond to a non-completely basic extension, by Theorem~\ref{thm:cbe}.
Examples of primitive completely normal bases for the corresponding extensions are given in \cite{morganmullen96}.
%except for the triple $(2,15,16)$. 
 
 The remaining cases to check are for $\ell=1$, $m\geq 3$ and $q\geq 8$.
 An application of Lemma~\ref{lemma:w(r)} for $a=12$ yields
 \[ W(q') \leq 5.61\cdot 10^{23} \cdot (q')^{1/12} \leq 5.61\cdot 10^{23} \cdot q^{n/12} . \]
 Using this, Eq.~\eqref{eq:ip3} from Proposition~\ref{prop:cn-bound} and Eq.~\eqref{eq:cond-1}, we obtain the sufficient condition
 \[
 q^{5\cdot 2\cdot m/12}\left(1-m\left(\frac{1}{q}+\frac{1}{q^2}+\frac{2}{3q^3}+\frac{3}{q^4}\right)\right)
\geq \frac{5.61\cdot 10^{23}}{2} \cdot 2^{2t(m)}.
 \]
Using Robin's bound it is sufficient to show that for $q\geq m+2$, and $q\geq 8$,
\[
q^{5m/6}\frac{m}{12q^3} \geq 2.81\cdot 10^{23}\cdot 4^{e^\gamma m\log\log m + \frac{0.6483m}{\log\log m}}.
\]
 This leads to the conditions
 \begin{align*}
   m8^{5m/6-3} &\geq 12\cdot 2.81\cdot 10^{23}\cdot 4^{e^\gamma m\log\log m + \frac{0.6483m}{\log\log m}}, \text{ for } 3\leq m\leq 5, \\
   m(m+2)^{5m/6-3}&\geq 12\cdot 2.81\cdot 10^{23}\cdot 4^{e^\gamma m\log\log m + \frac{0.6483m}{\log\log m}}, \text{ for } m\geq 7.
 \end{align*}
 The conditions are satisfied for $m>873$. For $3\leq m\leq 873$, we obtain the following 116 exceptions
 \begin{align*}
 (\ell,m ,q) =& (1, 3, 2^3\leq q \leq 2^{34}), (1, 5, 2^3\leq q\leq 2^{21}), (1, 7, 2^4\leq q\leq 2^{16}), \\
 & (1,9, 2^4\leq q\leq 2^{13}), (1, 11, 2^4\leq q\leq 2^{11}), (1, 13, 2^4\leq q\leq 2^9), \\
 & (1, 15, 2^5\leq q\leq 2^{10}),(1, 17, 2^5\leq q\leq 2^8), (1, 19, 2^5\leq q\leq 2^7), \\
 & (1, 21, 2^5\leq q\leq 2^8), (1, 23, 2^5\leq q\leq 2^6),(1, 25, 2^5\leq q\leq 2^6),\\
 & (1, 27, 2^5\leq q\leq 2^7), (1, 29, 2^5), (1, 33, 64), (1, 35, 64), (1, 45, 64).
 \end{align*}
 After removing the triples which are covered by Corollary~\ref{cor:cbe}, that is where $m\mid q-1$, the list shrinks to the following list of 85 possible exceptions:
\begin{center}
$(1, 3, 8)$, $(1, 3, 32)$, $(1, 3, 128)$, $(1, 3, 512)$, $(1, 3, 2048)$, $(1,
3, 8192)$, $(1, 3, 32768)$, $(1, 3, 131072)$, $(1, 3, 524288)$, $(1, 3,
2097152)$, $(1, 3, 8388608)$, $(1, 3, 33554432)$, $(1, 3, 134217728)$, $(1, 3,
536870912)$, $(1, 3, 2147483648)$, $(1, 3, 8589934592)$, $(1, 5, 8)$, $(1, 5,
32)$, $(1, 5, 64)$, $(1, 5, 128)$, $(1, 5, 512)$, $(1, 5, 1024)$, $(1, 5, 2048)$,
$(1, 5, 8192)$, $(1, 5, 16384)$, $(1, 5, 32768)$, $(1, 5, 131072)$, $(1, 5,
262144)$, $(1, 5, 524288)$, $(1, 5, 2097152)$, $(1, 7, 16)$, $(1, 7, 32)$, $(1, 7,
128)$, $(1, 7, 256)$, $(1, 7, 1024)$, $(1, 7, 2048)$, $(1, 7, 8192)$, $(1, 7,
16384)$, $(1, 7, 65536)$, $(1, 9, 16)$, $(1, 9, 32)$, $(1, 9, 128)$, $(1, 9, 256)$,
$(1, 9, 512)$, $(1, 9, 1024)$, $(1, 9, 2048)$, $(1, 9, 8192)$, $(1, 11, 16)$, $(1,
11, 32)$, $(1, 11, 64)$, $(1, 11, 128)$, $(1, 11, 256)$, $(1, 11, 512)$, $(1, 11,
2048)$, $(1, 13, 16)$, $(1, 13, 32)$, $(1, 13, 64)$, $(1, 13, 128)$, $(1, 13,
256)$, $(1, 13, 512)$, $(1, 15, 32)$, $(1, 15, 64)$, $(1, 15, 128)$, $(1, 15,
512)$, $(1, 15, 1024)$, $(1, 17, 32)$, $(1, 17, 64)$, $(1, 17, 128)$, $(1, 19,
32)$, $(1, 19, 64)$, $(1, 19, 128)$, $(1, 21, 32)$, $(1, 21, 128)$, $(1, 21, 256)$,
$(1, 23, 32)$, $(1, 23, 64)$, $(1, 25, 32)$, $(1, 25, 64)$, $(1, 27, 32)$, $(1, 27,
64)$, $(1, 27, 128)$, $(1, 29, 32)$, $(1, 33, 64)$, $(1, 35, 64)$ and $(1, 45, 64)$
\end{center}
%  \begin{align*}
%  &  (1,3,2^j), \ 28\leq j\leq 34, \ \ \
%    (1,5,2^j), \ 17\leq j\leq 21, \ \ \
%    (1,7,2^j), \ 12\leq j\leq 16, \\
%  &  (1,9,2^j), \ 10\leq j\leq 13, \ \ \ 
%    (1,11,2^j), \ 8\leq j\leq 11, \ \ \
%    (1,13,2^j), \ 7\leq j\leq 9, \\
%  &  (1,15,2^j), \ 6\leq j\leq 10, \ \ \
%    (1,17,2^j), \ 5\leq j\leq 8, \ \ \
%    (1,19,2^j), \ 5\leq j\leq 7, \\
%  &  (1,21,2^j), \ 5\leq j\leq 8, \ \ \
%    (1,23,2^j), \ 5\leq j\leq 6, \ \ \
%    (1,25,2^j), \ 5\leq j\leq 6, \\
%  &  (1,27,2^j), \ 5\leq j\leq 7, \ \ \
%    (1,29,2^5),(1, 31, 32), (1, 33, 64), (1, 35, 64), (1, 45, 64).
%  \end{align*}
Each of the above triples, with the sole exception of $(1,3,8)$, satisfy the initial condition of Eq.~\eqref{eq:ip_pcn2}, but the latter is already covered by the examples provided in \cite{morganmullen96}. This concludes the proof of Theorem~\ref{thm:our0}.
\begin{table}[h]
\begin{center}
\begin{tabular}{|ll||ll||ll||ll||ll||ll|}
\hline
$n$ & $q$ & $n$ & $q$ & $n$ & $q$ & $n$ & $q$ & $n$ & $q$ & $n$ & $q$ \\
\hline \hline
$6$ & $ 8$ &
$6$ & $ 11$ &
$6$ & $ 17$ &
$6$ & $ 23$ &
$6$ & $ 29$ &
$8$ & $ 11$ \\
$8$ & $ 19$ &
$12$ & $ 17$ &
$12$ & $ 23$ &
$12$ & $ 29$ &
$12$ & $ 41$ &
$24$ & $ 29$ \\
$24$ & $ 41$ &
$21$ & $9$ &
$12$ & $8$ &
$20$ & $8$ &
$24$ & $8$ &
$6$ & $8$ \\ \hline
\end{tabular}
\end{center}
\caption{Pairs $(n,q)$ that were not dealt with theoretically.\label{table:exceptions}}
\end{table}
\section{Conclusions}
In this work, a step towards the proof of Conjecture~\ref{conj:mm} was taken. We note that our restrictions $q<n$ and $q<m$ are direct consequences of the lower bounds for $\CN_q(n)$ from Proposition~\ref{prop:cn-bound}. For our methods to work more generally, new bounds for $\CN_q(n)$ are required. We believe that this would be an interesting and challenging direction for further research.
\section*{Acknowledgments}
The authors are grateful to the anonymous referee for his/her comments that significantly improved the quality and readability of this paper.
\end{document}